\documentclass[11pt,a4paper]{amsart}
\usepackage{amsmath}
\usepackage{amssymb}
\usepackage{mathrsfs}
\usepackage{amscd,mathabx}
\usepackage{comment}
\usepackage{enumitem}
\usepackage{url}
\usepackage[all]{xypic}

\usepackage{tikz}
\usetikzlibrary{patterns,intersections}
\usepackage{graphicx}
\usepackage{tikz-cd} 
\usepackage{float}
\usepackage{bm}
\usepackage{todonotes}

\usepackage[a4paper, top=3cm, bottom=3cm, left=2.5cm, right=2.5cm, marginpar=.75in]{geometry}


\def\R{\mathbb R}

\def\cK{\mathcal{K}}
\def\cX{\mathcal{X}}
\def\cZ{\mathcal{Z}}
\def\cY{\mathcal{Y}}

\def\cC{\mathcal{C}}
\def\cT{\mathcal{T}}

\def\cI{\mathcal{I}}
\def\cG{\mathcal{G}}

\def\cE{\mathcal{E}}

\def\wt#1{\widetilde{#1}}
\def\wh#1{\widehat{#1}}
\def\ol#1{\overline{#1}}

\DeclareMathOperator{\dist}{dist}
\DeclareMathOperator{\vol}{vol}

\DeclareMathOperator{\grf}{graph}

\def\cCamn{\cC^{1,\alpha}_{m,n}}
\def\Coa{C^{1,\alpha}}

\numberwithin{equation}{section}

\theoremstyle{plain}
\newtheorem{theorem}[equation]{Theorem}
\newtheorem*{theorem*}{Theorem}
\newtheorem{lemma}[equation]{Lemma}
\newtheorem{proposition}[equation]{Proposition}
\newtheorem{corollary}[equation]{Corollary}

\theoremstyle{definition}

\newtheorem{definition}[equation]{Definition}

\newtheorem{property}[equation]{Property}

\theoremstyle{remark}
\newtheorem{remark}[equation]{Remark}

\numberwithin{equation}{section}


\newtheorem*{ack}{Acknowledgements}

\title{Triangulating surfaces with bounded energy}

\author{Maciej Borodzik}
\address{Institute of Mathematics, University of Warsaw, ul. Banacha 2,
02-097 Warsaw, Poland}
\email{mcboro@mimuw.edu.pl}
\author{Monika Szczepanowska}
\address{Institute of Mathematics, University of Warsaw, ul. Banacha 2,
02-097 Warsaw, Poland}
\email{monika.szczepanowska.waw@gmail.com}

\subjclass{primary: 53A05; secondary: 28A75, 49Q10, 49Q20, 53C21}

\keywords{Menger curvature, surface energy; triangulation; knot energy; Genus}

\begin{document}
\maketitle

\begin{abstract}
  We show that if a closed $C^1$-smooth surface in a Riemannian manifold has bounded Kolasinski--Menger energy, then
  it can be triangulated with triangles whose number is bounded by the energy and the area. Each of the triangles is an image
  of a subset of a plane under a diffeomorphism whose distortion is bounded by $\sqrt{2}$.
\end{abstract}

\section{Introduction}

It is a general principle in the theory of energies of manifolds that
small energy implies uncomplicated topology. Probably, the first instance of
this principle is F\'ary--Milnor theorem \cite{Fary,Milnor}, stating that
a knot in $\R^3$ whose total curvature is less than $4\pi$ is necessarily trivial. 

For energies of curves in $\R^3$, there
are bounds for the stick number and the average crossing number
of a knot, see for example \cite{SSH} and references therein. 

For higher dimensional submanifolds some analogs exist, but are not abundant. The F\'ary--Milnor theorem can be generalized to 
the case of surfaces \cite{Hass}. Compactness results show that there
are finitely many isotopy classes of submanifolds below some
fixed energy level; see \cite{compiso}.

Motivated by \cite{Hass}, we give another bound on the complexity of a surface in $\R^n$ in terms of its energy. The topological complexity is
measured by the minimal number of triangles in the triangulation. In particular, we bound the genus of a surface in terms of its energy.
Actually, our result goes further.
For a surface with given energy we construct a triangulation in such a way that each
triangle is a graph of a function with bounded derivative and distortion. In this sense, the triangles in the triangulation are ``almost flat''.

Noting that the energy $\cE_p^{\ell}$
is introduced in~Definition~\ref{def:kolas_menger},
we present now 
the main result of this paper,

\begin{theorem}\label{thm:main}
  Suppose $\Sigma\subset\R^n$ is a closed surface. Let $\ell\in\{1,\dots,4\}$ and $p>2\ell$. Suppose $\Sigma$ has energy
  $\cE^{\ell}_p(\Sigma)=E<\infty$ and area $A$. Then $\Sigma$ can
  be triangulated with $C_2 A E^{2/(p-2\ell)}$ triangles,
  where $C_2$ is a universal constant depending only on $p,\ell$ and $n$.

  Each of the triangles is a graph of an open subset of a plane under a function whose derivative has norm bounded by $\sqrt{2}$
  and whose distortion is bounded by $\sqrt{2}$.
\end{theorem}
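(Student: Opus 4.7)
The plan is to convert the energy bound into quantitative local almost-flatness of $\Sigma$, cover $\Sigma$ with a controlled number of balls on which it is a small-gradient graph over the tangent plane, and then triangulate ball by ball. The first step is a \emph{graph lemma}: at each $x\in\Sigma$ one constructs a radius $r(x)>0$ such that $\Sigma\cap B(x,r(x))$ is the graph of a $C^1$ map $f_x$ defined on an open subset $U_x\subset T_x\Sigma$ with $\|Df_x\|\le 1$. This alone produces the asserted derivative and distortion bounds, because the parametrization $y\mapsto(y,f_x(y))$ has singular values in $[1,\sqrt{1+\|Df_x\|^2}]\subset[1,\sqrt{2}]$. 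The key quantitative ingredient is that a negative power of $r(x)$ is controlled by a \emph{local energy density} $e(x)$ obtained by integrating the Kolasinski--Menger integrand over $\ell$-tuples with one leg pinned at $x$: the natural scaling of $\cE_p^{\ell}$ gives a relation of the form $r(x)^{p-2\ell}\cdot e(x)\ge c$, with $\int_\Sigma e\,d\cH^2\le CE$.

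The second step is a covering and counting argument. From $\{B(x,r(x)/5)\}_{x\in\Sigma}$ I would extract via a Vitali or Besicovitch procedure a family $\{B(x_i,r(x_i)/5)\}$ of pairwise disjoint balls whose $5$-enlargements cover $\Sigma$. Because $\Sigma$ is a graph with gradient at most $1$ inside each of them, $B(x_i,r(x_i)/5)\cap\Sigma$ carries area at least $c\,r(x_i)^2$, hence $\sum_i r(x_i)^2\le CA$. Combining this with $r(x_i)^2\ge c\,e(x_i)^{-2/(p-2\ell)}$ and applying H\"older's inequality to the disjoint collection yields
\begin{equation*}
 N \;\le\; C\,A\,E^{2/(p-2\ell)}
\end{equation*}
for the number of balls. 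The hypothesis $p>2\ell$ is precisely what makes the exponents positive and the H\"older estimate close.

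Inside each graph ball, $\Sigma$ is a lifted planar region that I would triangulate with a bounded (universal) number of flat triangles by triangulating its projection to $T_{x_i}\Sigma$ and lifting through $y\mapsto(y,f_{x_i}(y))$; the resulting curved triangles are automatically graphs of planar domains satisfying the claimed $\sqrt{2}$-bounds. The main obstacle, and the part I expect to be most technical, is gluing these local triangulations into a single simplicial complex on $\Sigma$: a triangle crossing the overlap between two graph balls must remain a bounded-distortion graph over whichever tangent plane is chosen as its base. My approach is to fix a sufficiently dense $\delta$-net $\cN\subset\Sigma$ with $\delta$ much smaller than $\min_i r(x_i)$, then in each ball take the planar Delaunay triangulation of the projection of $\cN$; neighbors of a vertex will then always sit in a common graph ball, and matching local triangulations across overlaps becomes a combinatorial task. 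The derivative and distortion constants survive change of base tangent plane because the angle between $T_{x_i}\Sigma$ and $T_y\Sigma$ for $y$ in a common ball is itself controlled by the local energy density $e$; one simply builds slack into the graph lemma (say $\|Df_x\|\le 1/2$ instead of $1$) and absorbs the loss into the universal constant $C_2$.
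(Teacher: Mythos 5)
Your overall skeleton (energy $\Rightarrow$ local graph representation with gradient $\le 1$, a Vitali-type net whose cardinality is bounded by $A E^{2/(p-2\ell)}$ via an area lower bound on graph balls, then local flat triangulations lifted through $y\mapsto(y,f_x(y))$) matches the paper's strategy in spirit, and the $\sqrt{2}$ bounds on derivative and distortion come out the same way. But there are two genuine gaps. First, your key analytic input is asserted, not available: you posit a pointwise radius $r(x)$ controlled by a local energy density $e(x)$ with $r(x)^{p-2\ell}e(x)\ge c$ and $\int_\Sigma e\le CE$. This is not a formal consequence of the known regularity theory for $\cE^\ell_p$ (the integrand contains a supremum over the extra $m+3-\ell$ points, so it does not localize in the naive way), and proving such a local graph lemma would amount to redoing the regularity theorem. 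The paper avoids this entirely by using the uniform statement (Theorem~\ref{thm:regularity}): a single radius $R_0\sim E^{-1/(p-2\ell)}$ works at every point, which already yields $|\cX|\lesssim A r^{-2}\lesssim A E^{2/(p-2\ell)}$ without any H\"older step; your H\"older argument, even granting the local lemma, would produce $A^{1-2/(p-2\ell)}E^{2/(p-2\ell)}$ rather than the stated $AE^{2/(p-2\ell)}$, so the exponent bookkeeping also needs care.

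Second, and more seriously, the step you defer as ``a combinatorial task'' is exactly the core difficulty of the theorem, and your proposed mechanism does not work as described. Delaunay triangulations of the projections of a net to $T_{x_i}\Sigma$ and to $T_{x_j}\Sigma$ need not agree on the overlap: each projection distorts distances by a factor up to $\sqrt{2}$, the empty-circumcircle condition is not preserved under such bi-Lipschitz changes of chart, and degenerate/ambiguous Delaunay configurations cannot be ruled out, so the locally defined complexes can genuinely conflict and there is no bounded-count repair given in your sketch. The paper's proof spends essentially all of its length on precisely this global consistency problem, by a different route: it connects nearby net points by ``good arcs'' of controlled length, makes them transverse, removes desolate bigons so that any two arcs meet in at most $\cT(17)$ points (Lemma~\ref{lem:G2}), proves via a winding-number argument that every complementary component is planar and contained in a single graph patch (Lemma~\ref{lem:G1}, Corollary~\ref{cor:for_future_use}), and only then cuts each planar component into triangles, counting them through the bounded number of arc intersections (Corollary~\ref{cor:triang}). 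Without an argument of this kind (or a worked-out substitute for chart-consistent local triangulations), your proposal establishes the covering and the per-triangle geometric bounds but not the existence of a global triangulation with the claimed triangle count.
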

Combining Theorem~\ref{thm:main} with \cite[Theorem 1.1]{triang} stating that the minimal number $T(g)$ of triangles in a triangulation
of a closed surface of genus $g$ grows linearly with $g$, we obtain the following result.
\begin{corollary}
  There is a constant $C_g$ (depending on $\ell,n,p$) such that if $\Sigma\subset\R^n$ is a closed surface with $\cE^\ell_p(\Sigma)=E<\infty$ and area $A$, where $p>2\ell$,
  then $g(\Sigma)\le C_g A E^{2/(p-2\ell)}$.
\end{corollary}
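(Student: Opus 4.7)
The plan is to chain two inequalities: the upper bound on the number of triangles supplied by Theorem~\ref{thm:main}, and the topological lower bound on that same number coming from the genus of $\Sigma$.

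First, I would apply Theorem~\ref{thm:main} to $\Sigma$ to obtain a triangulation with at most $N := C_2 A E^{2/(p-2\ell)}$ triangles. The geometric data (each triangle being an almost-flat graph with distortion at most $\sqrt{2}$) plays no role in the corollary and can be discarded; only the combinatorial count matters here.

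Next, I would invoke \cite[Theorem 1.1]{triang}, which states that the minimal number $T(g)$ of triangles in a triangulation of a closed surface of genus $g$ grows linearly with $g$. In particular, there exists an absolute constant $c>0$ such that $T(g)\ge c\cdot g$ for every $g\ge 1$; the case $g(\Sigma)=0$ makes the corollary trivial. Since the triangulation produced above has $N$ triangles and lives on a surface of genus $g(\Sigma)$, we get $T(g(\Sigma))\le N$, hence $c\cdot g(\Sigma)\le N$, which rearranges to $g(\Sigma)\le (C_2/c)\cdot A E^{2/(p-2\ell)}$. Setting $C_g:=C_2/c$ completes the argument; the dependence on $\ell, n, p$ is inherited from $C_2$.

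The only step that requires genuine attention, rather than a one-line quotation, is a compatibility check: one must verify that the object produced by Theorem~\ref{thm:main} is a triangulation in the sense used by \cite{triang}, namely a bona fide simplicial decomposition rather than merely a cell decomposition whose cells happen to be topological triangles. Assuming the definitions line up (as the formulation of Theorem~\ref{thm:main} suggests), no further analytic work remains, and the corollary is a one-line application of the two cited results. For the non-orientable case, one would use the corresponding linear lower bound for the non-orientable genus, which is established in the same reference.
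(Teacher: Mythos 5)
Your argument is correct and coincides with the paper's own (one-sentence) proof: apply Theorem~\ref{thm:main} to bound the number of triangles by $C_2AE^{2/(p-2\ell)}$, then use the linear lower bound $T(g)\ge c\,g$ from \cite[Theorem 1.1]{triang} to conclude $g(\Sigma)\le (C_2/c)AE^{2/(p-2\ell)}$. Your extra remarks on the simplicial-versus-cell-decomposition compatibility and the genus-zero case are reasonable housekeeping but do not change the route.
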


The proof of Theorem~\ref{thm:main} goes along the following lines. 
The key tool is the Regularity Theorem of \cite{compiso}, recalled as Theorem~\ref{thm:regularity}, which states that an $m$-dimensional submanifold with bounded energy can
be covered by so-called graph patches, that is, subsets that are graphs of functions from subsets of $\R^m$ with bounded derivatives. The subsets in the cover have
diameter controlled by the energy, that is, they are not too small. An immediate corollary of Theorem~\ref{thm:regularity}
is an Ahlfors like inequality, Proposition~\ref{prop:local_volume_bound}, controlling from both sides the volume of the part of a submanifold 
cut out by a ball whose center is on the submanifold.

Next, assuming $\Sigma$ is a surface of bounded energy and area,
we find a cover of $\Sigma$ by balls of some radius $r$ (depending on the energy), such that each ball is a graph patch. We let
the centers of the balls be $x_1,\dots,x_N$. A simple topological argument in Subsection~\ref{sub:nets} allows us to control the number 
$N$ of the centers
in terms of the energy and area of the surface.

To construct the triangulation, we connect all pairs $x_i,x_j$, such that $||x_i-x_j||<4r$, by an arc $\gamma_{ij}$. The requirement
on $\gamma_{ij}$, spelled out in Definition~\ref{def:good_arc}, is that the length $\ell(\gamma_{ij})$ be bounded by 
a constant times 
$||x_i-x_j||$. Unlike geodesics, two such curves can intersect at more than a single point. By a procedure called \emph{bigon removal}
we improve the collection of curves $\gamma_{ij}$ to obtain a concrete bound on the number of intersection points between
them; see Lemma~\ref{lem:G2}.

We let $\Sigma_0$ be the complement of $\bigcup \gamma_{ij}$ in $\Sigma$.
The triangulation is constructed by cutting connected components of $\Sigma_0$
into triangles. 
A second technicality, and chronologically the first we deal with in this article, appears.  We need to 
ensure that each connected component of $\Sigma_0$ is planar. We address this problem
in Lemma~\ref{lem:G1}. Given that lemma, we consider each component $C$ of $\Sigma_0$. As it is planar, 
we cut $C$ into triangles without adding new vertices.
The number of triangles in the triangulation  is estimated using the number of intersection points between curves $\gamma_{ij}$;
see Corollary~\ref{cor:triang}. The proof of Theorem~\ref{thm:main} is summarized in Subsection~\ref{sub:main}.

\begin{ack}
  The paper is based on the Master thesis of the second named author under supervision of the first named author. The authors are grateful
  to S\l{}awomir Kolasi\'nski for fruitful discussions. 

  MB was supported by the Polish National Science Grant 2019/B/35/ST1/01120. MS was supported by the National Science Center grant 2016/22/E/ST1/00040.
\end{ack}

\section{Review of surface energies}
In this section we recall definitions of surface energy. References include \cite{geomsobol,compiso,mengersurf}.

\subsection{Discrete Menger energy for a submanifold in $\R^n$}

For points $(x_0,\dots,x_{m})$ in $\R^n$,
we let $\Delta(x_0,\dots,x_{m})$ denote the $m$-dimensional simplex spanned by $x_0,\dots,x_{m}$ (the convex hull of these points).
We define
\[\cK(x_0,\dots,x_{m})=\frac{1}{d^{m}}\vol_{m}\Delta(x_0,\dots,x_{m}),\]
where $d$ is the diameter of $\Delta(x_0,\dots,x_{m})$.

Suppose now $\Sigma\subset\R^n$ is a Lipshitz submanifold of dimension $m<n$. Let $\ell\in\{1,\dots,m+2\}$ and $p>0$.
\begin{definition}\label{def:kolas_menger}
  The \emph{Kolasinski-Menger energy} of $\Sigma$ is the integral
  \begin{equation}\label{eq:edef}\cE^\ell_p(\Sigma)=
    \int_{\Sigma^\ell} \sup_{x_\ell,\dots,x_{m+2}\in\Sigma}
    \cK(x_0,\dots,x_{m+2})^p\, d\!\vol(\Sigma^\ell).
  \end{equation}
  The integral is computed
  with respect to variables $x_0,\dots,x_{\ell-1}$.
\end{definition}

\subsection{Graph patches}\label{sub:graph_patches}

For $\alpha\in(0,1]$, we let $\cCamn$ denote the set of
all compact smooth manifolds of dimension $m$, embedded
$\Coa$-smoothly in $\R^n$. The following definition
is taken from \cite[Definition 1]{compiso}
\begin{definition}[Graph patches]
  Suppose $R,L,d$ are real positive and $\alpha\in(0,1]$.
  The class $\cCamn(R,L,d)$ is the class of all $m$-dimensional
  submanifolds $\Sigma\subset\R^n$ such that:
  \begin{enumerate}[label=(P-\arabic*)]
    \item $\Sigma\subset B(0,d)$ \label{item:diameter_bound};
    \item for each $x\in\Sigma$, there exists a function $f_x\colon
      T_x\Sigma\to T_x\Sigma^{\perp}$ of class $\Coa$ 
      with $f_x(0)=0$, $Df_x(0)=0$ and\label{item:graph_patch}
      \[\Sigma\cap B(x,R)=(x+\grf(f_x))\cap B(x,R).\]
    \item the function $f_x$ is Lipshitz with Lipshitz constant $1$
      and $||D f_x(\xi)-Df_x(\eta)||\le L|\xi-\eta|^\alpha$.\label{item:controled_bending}
  \end{enumerate}
\end{definition}

We quote now the following result 
\begin{theorem}[\expandafter{\cite[Regularity Theorem]{compiso}}]\label{thm:regularity}
  For $p>m\ell$, there exist constants $c_1(m,n,\ell,p)$
  and $c_2(m,n,\ell,p)$ such that with $\alpha=1-m\ell/p$, any
  Lipshitz manifold $\Sigma\in\cCamn$ with energy $\cE_p^\ell=E<\infty$
  satisfies
  \[\Sigma\in\cCamn(c_1 E^{-1/(p-m\ell)},c_2 E^{1/p},d)\]
  as long as $\Sigma\subset B(0,d)$.
\end{theorem}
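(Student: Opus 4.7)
The plan is to prove Theorem~\ref{thm:regularity} in three stages: (a) translate the Menger-type integrand $\cK^p$ into a geometric control of flatness of $\Sigma$; (b) upgrade the integral bound to a pointwise Hölder modulus for the Gauss map $x\mapsto T_x\Sigma$ via a Morrey--Campanato-type embedding; and (c) deduce the quantitative graph patches by an inverse-function argument, reading off the scalings $R\sim E^{-1/(p-m\ell)}$ and $L\sim E^{1/p}$ from the Hölder estimate.

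For stage (a), I would use the elementary base-times-height identity
\[
\vol_{k+1}\Delta(x_0,\ldots,x_{k+1})=\tfrac{1}{k+1}\,\vol_{k}\Delta(x_0,\ldots,x_k)\cdot\dist\bigl(x_{k+1},\mathrm{aff}(x_0,\ldots,x_k)\bigr)
\]
together with the diameter normalisation in the definition of $\cK$. When the leading points $x_0,\ldots,x_m$ span a nondegenerate $m$-simplex with $\vol_m\simeq d^m$, one reads off
\[
\dist\bigl(x_{m+1},\mathrm{aff}(x_0,\ldots,x_m)\bigr)\le c\,\cK(x_0,\ldots,x_{m+2})\,d,
\]
so smallness of $\cK$ is the geometric condition of approximate $(m+1)$-planarity. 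Existence of such a nondegenerate simplex at every scale near a base point $x_0\in\Sigma$ would be secured by an Ahlfors-type lower area bound on $\vol_m(\Sigma\cap B(x_0,r))$ (using the Lipschitz-manifold assumption for the initial qualitative lower bound) combined with a pigeonhole-style selection lemma producing $x_1,\ldots,x_m$ at scale $r$ spanning a simplex of $m$-volume at least $c r^m$.

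For stage (b), observe that the integration in~\eqref{eq:edef} runs over $\ell$ variables on an $m$-dimensional $\Sigma$, so the natural volume of an $\ell$-fold neighbourhood at scale $r$ is of order $r^{m\ell}$. Hölder's inequality applied to the finite integral $\cE^\ell_p(\Sigma)=E$ on such a neighbourhood gives an $L^1$ control of $\sup\cK$ of order $E^{1/p}\cdot r^{m\ell(1-1/p)}$, and after averaging over the "good" $\ell$-tuples produced in stage~(a) this yields a pointwise flatness estimate
\[
\sup_{y\in\Sigma\cap B(x_0,r)}\dist(y,T_{x_0}\Sigma)\le c\,E^{1/p}\,r^{1+\alpha},\qquad \alpha=1-\tfrac{m\ell}{p},
\]
valid precisely for $p>m\ell$. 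A standard Campanato-type comparison between tangent planes at nearby points (using that the tangent plane is recovered by minimising the one-sided distance) then promotes this to
\[
\dist_G(T_x\Sigma,T_y\Sigma)\le c_2\,E^{1/p}\,\|x-y\|^{\alpha},
\]
where $\dist_G$ denotes the Grassmannian distance; this is the Hölder bound in~\ref{item:controled_bending}.

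Stage (c) is then almost formal. Fix $x\in\Sigma$ and set $R=c_1 E^{-1/(p-m\ell)}$, where $c_1$ is chosen so that $c_2 E^{1/p}R^{\alpha}\le \tfrac12$. For every $y\in\Sigma\cap B(x,R)$ the angle between $T_y\Sigma$ and $T_x\Sigma$ is then below $\pi/4$, so the orthogonal projection $\pi\colon\Sigma\cap B(x,R)\to T_x\Sigma$ is a local diffeomorphism with derivative of operator norm at most $1$; connectedness of the patch promotes this to a global graph representation by a $1$-Lipschitz function $f_x$ as required in~\ref{item:graph_patch} and~\ref{item:controled_bending}. I expect stage (b) to be by far the main obstacle: extracting a Morrey embedding from the singular integrand $\sup_{x_\ell,\ldots,x_{m+2}}\cK^p$ is subtle because the sup over the $m+3-\ell$ auxiliary variables is non-local, and making the critical exponent $p>m\ell$ correspond to the Hölder exponent $\alpha=1-m\ell/p$ on the Gauss map requires that the selection of "good" $\ell$-tuples from stage~(a) be uniform across scales, which in turn requires a careful iterated covering of $\Sigma$.
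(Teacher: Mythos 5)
This statement is not proved in the paper at all: it is imported verbatim from the reference \cite{compiso} (the ``Regularity Theorem'' of Kolasi\'nski--Strzelecki--von der Mosel), so there is no in-paper argument to compare yours against. Your three-stage outline does track the strategy used in that literature --- (a) voluminous simplices turning smallness of $\cK$ into approximate flatness, (b) a Chebyshev/Morrey-type upgrade from the integral bound to a pointwise decay of flatness with exponent $\alpha=1-m\ell/p$, (c) a tilt-iteration producing the $C^{1,\alpha}$ graph patches with $R\sim E^{-1/(p-m\ell)}$ and $L\sim E^{1/p}$ --- so the skeleton is right and the scalings are read off correctly.

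However, as a proof the proposal has genuine gaps, concentrated exactly where you concede the difficulty lies. First, the uniform lower Ahlfors bound $\vol_m(\Sigma\cap B(x,r))\ge c\,r^m$ cannot be dismissed as ``qualitative from the Lipschitz assumption'': it must hold with a constant independent of $x$, $r$ and $\Sigma$ for the whole scheme to produce uniform constants $c_1,c_2$, and in the actual proof it is itself a nontrivial quantitative lemma. Second, the selection of a base $m$-simplex with $\vol_m\simeq d^m$ \emph{uniformly at every scale and every base point} is a real lemma, not a routine pigeonhole, and your base-times-height inequality $\dist(x_{m+1},\mathrm{aff}(x_0,\dots,x_m))\le c\,\cK\,d$ is not correct as stated for the $(m+2)$-dimensional simplex appearing in Definition~\ref{def:kolas_menger}: factoring $\vol_{m+2}\Delta$ through two heights requires a lower bound on the height of the \emph{other} auxiliary point over the $(m+1)$-dimensional affine span, and such a point need not exist near $x_0$ when $\Sigma$ is locally almost flat --- this is precisely where the global sup over $x_\ell,\dots,x_{m+2}$ must be exploited, and you do not explain how. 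Third, the passage from ``$\sup\cK$ has small average over $\ell$-tuples in a ball'' to ``$\sup\cK$ is small for some \emph{good} (voluminous) tuple'' needs a measure-of-bad-tuples estimate interlocked with the positive measure of good tuples, and the subsequent Campanato comparison of tangent planes needs a dyadic telescoping across scales; both are only gestured at. In short, the outline is a plausible roadmap but stage (b), which you yourself identify as the main obstacle, is the entire content of the theorem and is not supplied.
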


We now introduce some notation regarding graph patches.
Let $x\in\Sigma$ and $r<c_1E^{-1/(p-m\ell)}$. 
We let $H\subset\R^n$ be the tangent plane to $\Sigma$ at~$x$. The map $f_x$ induces a map
$\phi_x\colon H\to\Sigma$ given by $y\mapsto(y,x+f_x(y))$, where we identify $H$ with $T_x\Sigma$. The inverse map
$\pi_x\colon\Sigma\to H$ is a projection along $H^\perp$. Both maps $\phi_x$ and $\pi_x$ are defined only in a neighborhood of $x$.

We will use the following corollary of Theorem~\ref{thm:regularity}. We use the notation $B^H$ for a ball contained in $H$, the notation
like $B(y,\rho)$ means an open ball in the ambient space $\R^n$.
\begin{corollary}\label{cor:regularity}
  Let 
  \begin{equation}\label{eq:r0def}
    R_0=2^{-1/2}\min(c_1,c_2^{-1/p\alpha})E^{-1/(p-m\ell)}.
  \end{equation}
  If $r<R_0$, then:
  \begin{enumerate}[label=(C-\arabic*)]
    \item $\phi_x$ is well-defined on $H\cap B^{H}(x,r)$;\label{item:phi}
    \item $\pi_x$ is well-defined on $\Sigma\cap B(x,r\sqrt{2})$;\label{item:pi}
    \item for any $s<r\sqrt{2}$, the image $B^H(x,s)\subset \pi_x(\Sigma\cap B(x,s))\subset B^H(x,s\sqrt{2})$.\label{item:U}
    \item $||D\phi_x(y)||\le \sqrt{2}$ for all $y\in U$;\label{item:dphi}
    \item $\phi_x$ is Lipshitz with Lipshitz constant $\sqrt{2}$ and $\pi_x$ is Lipshitz with Lipshitz constant $1$.\label{item:lip}
  \end{enumerate}
\end{corollary}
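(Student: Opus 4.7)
The plan is to read everything off from Theorem~\ref{thm:regularity} applied with $\alpha=1-m\ell/p$, which places $\Sigma$ in the graph patch class $\cCamn(R,L,d)$ with $R=c_1E^{-1/(p-m\ell)}$ and $L=c_2E^{1/p}$. The point of the specific choice of $R_0$ is that $R_0\sqrt{2}\le R$: any point within distance $r\sqrt{2}$ of $x$ (with $r<R_0$) then lies in the region where property \ref{item:graph_patch} applies, so that $\Sigma$ coincides with the graph of $f_x$ over $T_x\Sigma$. The auxiliary factor $c_2^{-1/(p\alpha)}$ in $R_0$ will not be used for \ref{item:phi}--\ref{item:lip} directly, but recording it alongside $c_1$ keeps $L R_0^\alpha$ bounded by an absolute constant (here the identity $\alpha/(p-m\ell)=1/p$ is the relevant exponent arithmetic), which is what makes $f_x$ genuinely small on the patch.

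Claims \ref{item:phi} and \ref{item:pi} follow immediately: the maps $\phi_x$ and $\pi_x$ are defined exactly where the graph patch representation is available, and $r\sqrt{2}<R$ places both $H\cap B^H(x,r)$ and $\Sigma\cap B(x,r\sqrt{2})$ inside that region. Each point is then written uniquely as $x+\tilde y+f_x(\tilde y)$ with $\tilde y\in T_x\Sigma$.

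The geometric heart of the corollary is \ref{item:U}. Identifying $\R^n=T_x\Sigma\oplus T_x\Sigma^\perp$ and using $f_x(\tilde y)\in T_x\Sigma^\perp$, every $z=x+\tilde y+f_x(\tilde y)$ in the patch satisfies the Pythagorean identity
\[|z-x|^2=|\tilde y|^2+|f_x(\tilde y)|^2.\]
The outer inclusion then drops out: $z\in B(x,s)$ forces $|\tilde y|\le |z-x|<s$, so $\pi_x(z)\in B^H(x,s)\subset B^H(x,s\sqrt{2})$. For the inner inclusion I take $z=\phi_x(y)$ and use the Lipshitz constant $1$ of $f_x$ from \ref{item:controled_bending} together with the same identity to place $z$ inside the graph patch region with $\pi_x(z)=y$; the constraint $s<r\sqrt{2}<R_0\sqrt{2}\le R$ is exactly what guarantees that the preimage stays where the patch description is valid.

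Finally, \ref{item:dphi} and \ref{item:lip} are a short calculation in orthogonal coordinates: with respect to the splitting $\R^n=T_x\Sigma\oplus T_x\Sigma^\perp$ one has $D\phi_x(\tilde y)=(\id_{T_x\Sigma},Df_x(\tilde y))$, so the Lipshitz bound $|Df_x|\le 1$ from \ref{item:controled_bending} gives $\|D\phi_x\|=\sqrt{1+|Df_x|^2}\le\sqrt{2}$. Integrating this derivative bound along straight segments in $T_x\Sigma$ promotes it to a $\sqrt{2}$-Lipshitz estimate on $\phi_x$, while $\pi_x$ is automatically $1$-Lipshitz as the restriction to $\Sigma$ of an orthogonal projection onto $H$. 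The only mildly delicate point I foresee is the careful coordination of the $\sqrt{2}$ factors so that the graph patch is available in every ball entering the argument; after that hurdle is cleared, each assertion reduces to a one-line computation.
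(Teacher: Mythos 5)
Your handling of \ref{item:phi}, \ref{item:pi}, \ref{item:dphi} and \ref{item:lip} is correct and is essentially the paper's own (very terse) argument: everything is read off the graph-patch properties \ref{item:graph_patch}--\ref{item:controled_bending}; the block form $D\phi_x=\begin{pmatrix} I & Df_x\end{pmatrix}$ with $\|Df_x\|\le 1$ gives $\|D\phi_x\|\le\sqrt{2}$ and hence the $\sqrt{2}$-Lipschitz bound, and $\pi_x$ is $1$-Lipschitz as the restriction of an orthogonal projection. Your observation that $R_0\sqrt{2}\le c_1E^{-1/(p-m\ell)}$, so that every ball appearing in the statement stays inside the patch radius, is exactly the role of the constant in \eqref{eq:r0def}, and the exponent identity $\alpha/(p-m\ell)=1/p$ is correct.

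The genuine gap is in the inner inclusion of \ref{item:U}. For $y\in B^H(x,s)$ you set $z=\phi_x(y)$ and use the Lipschitz bound on $f_x$ with the Pythagorean identity, but that estimate only yields $|z-x|\le\sqrt{2}\,|y-x|<s\sqrt{2}$, i.e.\ $B^H(x,s)\subset\pi_x(\Sigma\cap B(x,s\sqrt{2}))$; it does not place $z$ in $B(x,s)$, which is what the stated inclusion $B^H(x,s)\subset\pi_x(\Sigma\cap B(x,s))$ requires. Moreover, that literal inclusion cannot be extracted from the patch data at all: inside the patch one has $\pi_x(\Sigma\cap B(x,s))=\{\xi\in T_x\Sigma:\ |\xi|^2+|f_x(\xi)|^2<s^2\}$, and for $|\xi|$ close to $s$ this would force $f_x(\xi)$ to vanish, which fails already for a slightly curved patch such as a round sphere of large radius. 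The version your computation actually proves, namely $B^H(x,s/\sqrt{2})\subset\pi_x(\Sigma\cap B(x,s))\subset B^H(x,s)$, is also the one the paper uses later, cf.\ \eqref{eq:U_again} in the proof of Proposition~\ref{prop:local_volume_bound}. So you should either prove and state that corrected form, or explicitly note that the claim as printed holds only after shifting the factor $\sqrt{2}$; as written, your assertion that the inner inclusion follows from "placing $z$ inside the graph patch region" conflates the patch radius $R$ with the radius $s$ and does not justify the stated claim.
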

\begin{proof}
  Property \ref{item:dphi} follows from \ref{item:controled_bending} and the fact that $Df_x(0)=0$. The map $\pi_x$ is a projection,
  so it is Lipshitz with Lipshitz constant $1$. Item~\ref{item:lip} is immediately deduced from \ref{item:controled_bending} by the triangle inequality.   Property \ref{item:pi}
  is just \ref{item:graph_patch} and property \ref{item:U} follows readily. \ref{item:phi} follows from \ref{item:U}.
\end{proof}
\begin{corollary}\label{cor:distort}
  The distortion of~$\phi_x$ is bounded by $\sqrt{2}$.
\end{corollary}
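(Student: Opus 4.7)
The plan is to unpack the definition of distortion and immediately apply the Lipschitz bounds already established in Corollary~\ref{cor:regularity}\ref{item:lip}. Recall that the distortion of a bi-Lipschitz map $\phi\colon X\to Y$ is
\[
\mathrm{dist}(\phi)=\mathrm{Lip}(\phi)\cdot\mathrm{Lip}(\phi^{-1}),
\]
so the task reduces to identifying the inverse of $\phi_x$ and bounding its Lipschitz constant.

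First I would observe that, by construction of $\phi_x$ as the graph parametrization $y\mapsto(y,x+f_x(y))$, its inverse on the image $\phi_x(H\cap B^H(x,r))\subset\Sigma$ coincides with the orthogonal projection $\pi_x$ along $H^{\perp}$. This identification is precisely what \ref{item:graph_patch}, together with the well-definedness statements \ref{item:phi} and \ref{item:pi}, guarantees.

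Next I would invoke \ref{item:lip}, which gives $\mathrm{Lip}(\phi_x)\le\sqrt{2}$ and $\mathrm{Lip}(\pi_x)\le 1$. Multiplying the two constants yields
\[
\mathrm{dist}(\phi_x)=\mathrm{Lip}(\phi_x)\cdot\mathrm{Lip}(\pi_x)\le\sqrt{2}\cdot 1=\sqrt{2},
\]
which is the desired bound.

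There is no real obstacle here: the statement is a bookkeeping consequence of the previous corollary, and the only subtle point to mention explicitly is the identification $\phi_x^{-1}=\pi_x$ on the relevant domain, so that the Lipschitz constant of the inverse is controlled by the Lipschitz constant of the ambient projection rather than by any reconstruction of $\phi_x^{-1}$ from the implicit function theorem.
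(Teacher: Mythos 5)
Your proposal is correct and takes essentially the same route as the paper: both arguments come down to the Lipschitz bounds of \ref{item:lip}, namely $\operatorname{Lip}(\phi_x)\le\sqrt{2}$ and $\operatorname{Lip}(\pi_x)\le 1$, with $\pi_x$ serving as the inverse of $\phi_x$ on the graph patch. The only cosmetic difference is that the paper bounds a pointwise distortion (a $\limsup$ of max/min displacement ratios over small spheres), which is dominated by the bi-Lipschitz product $\operatorname{Lip}(\phi_x)\cdot\operatorname{Lip}(\pi_x)$ you use, so the two estimates coincide in substance.
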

\begin{proof}
  The distortion of $\phi_x$ at the point $z$ is given by 
  \[D(z)=\lim\sup_{r\to 0}\dfrac{\max_{y\colon ||x-y||=r}||\phi_x(y)-\phi_x(z)||}{\min_{y\colon ||x-y||=r}||\phi_x(y)-\phi_x(z)||}.\]
  The numerator in the formula is bounded from above by $r$ times the Lipshitz constant of $\phi_x$. The denominator
  is bounded from below by $r$ times the Lipshitz constant of $\pi_x$.
\end{proof}
  
\subsection{Local volume bound}\label{sub:applications}
Throughout Subsection~\ref{sub:applications} we let $\Sigma$
be a submanifold of $\R^n$ in the class $\cCamn(R_0,L,d)$.

\begin{proposition}[Local volume bound]\label{prop:local_volume_bound}
Suppose $r<R_0$. Then for any $x\in\Sigma$ we have
\[2^{-n/2}V_mr^m < \vol(\Sigma\cap B(x,r)) < 2^{n-1}V_mr^m,\]
where $V_m$ is the volume of the unit ball in dimension $m$.
\end{proposition}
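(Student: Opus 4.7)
The plan is to transfer the volume estimate on $\Sigma \cap B(x,r)$ to an estimate on the corresponding subsets of the tangent plane $H = T_x\Sigma$, via the two maps $\phi_x\colon H\to\Sigma$ and $\pi_x\colon\Sigma\to H$ supplied by Corollary~\ref{cor:regularity}. Fix $x\in\Sigma$ and apply the corollary with its parameter equal to the $r$ from the proposition (so $r<R_0$). This yields $\phi_x$ and $\pi_x$ with Lipshitz constants $\sqrt{2}$ and $1$ respectively, and, taking $s=r$ in~\ref{item:U},
\[B^H(x,r)\subset\pi_x(\Sigma\cap B(x,r))\subset B^H(x,r\sqrt{2}).\]

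For the lower bound, I would use that $\pi_x$ is $1$-Lipshitz, so by the standard inequality $\cH^m(f(A))\le L^m\cH^m(A)$ for $L$-Lipshitz $f$, one has $\vol\bigl(\pi_x(\Sigma\cap B(x,r))\bigr)\le\vol(\Sigma\cap B(x,r))$. Combining this with the inclusion $B^H(x,r)\subset\pi_x(\Sigma\cap B(x,r))$ and the fact that $\vol(B^H(x,r))=V_m r^m$ gives $\vol(\Sigma\cap B(x,r))\ge V_m r^m$, which is in fact stronger than the claimed lower bound $2^{-n/2}V_m r^m$.

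For the upper bound, I would use that $\Sigma$ is the graph of $f_x$ over $H$ in a neighborhood of $x$, so $\phi_x\circ\pi_x=\id$ on $\Sigma\cap B(x,r\sqrt{2})$. Therefore
\[\Sigma\cap B(x,r)=\phi_x\bigl(\pi_x(\Sigma\cap B(x,r))\bigr)\subset\phi_x\bigl(B^H(x,r\sqrt{2})\bigr).\]
Since $\phi_x$ is $\sqrt{2}$-Lipshitz, the $m$-volume of the right-hand side is at most $(\sqrt{2})^m\cdot V_m(r\sqrt{2})^m=2^m V_m r^m$, which is bounded by $2^{n-1}V_m r^m$ because $m<n$.

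There is no genuine obstacle in this argument; the proof is just a clean double application of the Lipshitz volume-distortion inequality, once via the projection to get the lower bound and once via the parametrization to get the upper bound. The only care needed is to read off the correct containments from Corollary~\ref{cor:regularity} and to invoke the right Lipshitz constant on each side, since the slight asymmetry between the constants of $\phi_x$ and $\pi_x$ is precisely what creates the gap between the two sides of the Ahlfors-type estimate.
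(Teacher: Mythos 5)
Your overall strategy --- two-sided Lipschitz volume distortion via $\pi_x$ and $\phi_x$ --- is in substance the same as the paper's proof (the paper integrates $\sqrt{\det(I+Df_xDf_x^T)}$, bounded below by $1$ and above by a constant, over $U=\pi_x(\Sigma\cap B(x,r))$, which amounts to exactly your two Lipschitz bounds). The genuine problem is your lower bound. The inclusion you take from \ref{item:U} at $s=r$, namely $B^H(x,r)\subset\pi_x(\Sigma\cap B(x,r))$, does not hold: since $f_x$ is only $1$-Lipschitz with $f_x(0)=0$, the graph point over $y\in H$ can lie as far as $\sqrt{2}\,||y-x||$ from $x$, so points $y$ with $||y-x||$ close to $r$ need not be projections of points of $\Sigma\cap B(x,r)$. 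The correct two-sided containment --- and the one the paper actually uses in its own proof, see \eqref{eq:U_again} --- is $B^H(x,r/\sqrt{2})\subset\pi_x(\Sigma\cap B(x,r))\subset B^H(x,r)$; item \ref{item:U} as printed is shifted by a factor $\sqrt{2}$ and cannot be applied literally at $s=r$. Indeed the conclusion you drew from it, $\vol(\Sigma\cap B(x,r))\ge V_mr^m$, is false in general: for $m=2$ take $f_x$ a smoothed cone of slope close to $1$, $f_x(y)\approx\max(0,|y|-\varepsilon)$; then $\pi_x(\Sigma\cap B(x,r))$ is essentially $B^H(x,r/\sqrt{2})$ and $\vol(\Sigma\cap B(x,r))\approx \pi r^2/\sqrt{2}<V_2r^2$. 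The fact that you seemed to obtain a bound \emph{stronger} than the stated one was the warning sign.

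The repair keeps your argument intact: with the correct containment, the $1$-Lipschitz bound for $\pi_x$ gives $\vol(\Sigma\cap B(x,r))\ge\vol\bigl(B^H(x,r/\sqrt{2})\bigr)=2^{-m/2}V_mr^m>2^{-n/2}V_mr^m$, strictly, because $m<n$. On the upper side you should likewise use $\pi_x(\Sigma\cap B(x,r))\subset B^H(x,r)$ (the projection only shrinks distances to $x$) rather than $B^H(x,r\sqrt{2})$: your constant $2^mV_mr^m$ only gives $\le 2^{n-1}V_mr^m$, with equality possible when $m=n-1$, so the strict inequality of the proposition is not quite reached; the sharper containment yields $(\sqrt{2})^mV_mr^m=2^{m/2}V_mr^m<2^{n-1}V_mr^m$, as required.
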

\begin{proof}
  Let $U=\phi_x(\Sigma\cap B(x,r/\sqrt{2}))$ as in \ref{item:U}. We know that 
  \begin{equation}\label{eq:U_again} 
    B^H(x,r/\sqrt{2})\subset U\subset B^H(x,r).
  \end{equation}
  As $\Sigma\cap B(x,r)$ is a graph of $\phi_x$, a classical result from multivariable calculus computes
  the volume of $\Sigma\cap B(x,r)$ in terms of integral over $U$ over the square root of the Gram determinant:
  \[\vol(\Sigma\cap B(x,r))=\int_U \sqrt{|\det G|},\]
  where $G=D\phi_x D\phi_x^T$.

  The derivative of $\phi_x$ has a block structure
  $D\phi_x=\begin{pmatrix} I & Df_x \end{pmatrix}$, so
  $G:=I+Df_xDf_x^T$. 
  On the one hand,
  since $Df_xDf_x^T$ is non-negative definite, $\det G\ge 1$. On the other hand,
  $||Df_x||<1$, so $||Df_xDf_x^T||<1$. Therefore, $||G||<2$. This means that all
  the eigenvalues of the symmetric matrix $G$ have modulus less than $2$, so $|\det G|<2^n$. 
  In particular,
  \[\int_U 1\le \vol(\Sigma\cap B(x,r))\le \int_U 2^{n-1}.\]
  Combining this with \eqref{eq:U_again}
  we quickly conclude the proof.
\end{proof}
\section{Geodesic-like systems of curves}
\subsection{Nets of points}\label{sub:nets}
We will need the following technical definition.
\begin{definition}
  Let $r>0$. A finite set $\cX$ of points in $\Sigma$ is a \emph{$r$-net} if
  \begin{itemize}
    \item the balls $B(x,r)$ for $x\in\cX$ cover $\Sigma$;
    \item for any $x,x'\in\cX$, $x\neq x'$, we have $\dist(x,x')\ge r/2$.
  \end{itemize}
\end{definition}
The following result is classical in general topology.
\begin{proposition}
  Each compact submanifold $\Sigma$ admits an $r$-net.
\end{proposition}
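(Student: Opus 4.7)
The plan is to construct the $r$-net by a greedy/maximality argument and then verify both defining properties using compactness.

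First I would construct $\cX$ inductively. Pick any $x_1\in\Sigma$. Having chosen $x_1,\dots,x_k$ pairwise $r/2$-separated, stop if every $y\in\Sigma$ satisfies $\dist(y,x_i)<r/2$ for some $i\le k$; otherwise pick $x_{k+1}\in\Sigma$ at distance at least $r/2$ from each $x_i$. Equivalently, by Zorn's lemma one can simply fix a maximal subset $\cX\subset\Sigma$ with the property that any two distinct points are at distance at least $r/2$.

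Next I would argue the process terminates, i.e.\ $\cX$ is finite. If not, one would obtain an infinite sequence $(x_k)\subset\Sigma$ with $\dist(x_i,x_j)\ge r/2$ for all $i\ne j$. Since $\Sigma$ is compact, this sequence has a convergent subsequence, whose terms are eventually within distance $r/4$ of each other, contradicting $r/2$-separation. Hence $\cX$ is finite.

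Finally I would verify the two properties of Definition of $r$-net. The separation condition $\dist(x,x')\ge r/2$ is built into the construction. For the covering condition, take any $y\in\Sigma$. If $y\in\cX$, then $y\in B(y,r)$. Otherwise maximality of $\cX$ forbids adding $y$, which means there exists some $x\in\cX$ with $\dist(x,y)<r/2<r$, so $y\in B(x,r)$. Thus $\{B(x,r):x\in\cX\}$ covers $\Sigma$.

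The only non-routine point is extracting finiteness from compactness, but this is immediate once one observes that an $r/2$-separated set in a compact metric space cannot contain a Cauchy subsequence, hence cannot be infinite. No subtle issue arises from the submanifold structure; the statement holds for any compact metric space, and $\Sigma$ with its induced metric from $\R^n$ suffices.
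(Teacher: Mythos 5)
Your proof is correct, but it takes a different route from the paper. You fix a maximal $r/2$-separated subset $\cX\subset\Sigma$ (or build it greedily), obtain the covering property as a consequence of maximality, and deduce finiteness from sequential compactness, since an infinite $r/2$-separated set would admit a convergent, hence Cauchy, subsequence. The paper instead starts from the open-cover side: it covers $\Sigma$ by all balls $B(x,r/2)$, extracts a finite subcover, and then prunes the finite set of centers inductively, discarding any center within distance $r/2$ of a retained one; the covering by $r$-balls survives the pruning because every discarded $r/2$-ball lies inside the $r$-ball of the center that forced its removal. Your argument is slightly more general and self-contained (it works verbatim in any compact metric space and makes the separation property automatic), while the paper's version gets finiteness for free from the finite subcover and avoids the Zorn/termination discussion at the cost of a small bookkeeping induction. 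Both verifications of the two defining properties of an $r$-net are complete in your write-up, and the observation that maximality yields $\dist(x,y)<r/2<r$ for some $x\in\cX$ is exactly the point needed for the covering condition.
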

\begin{proof}
  For the reader's convenience we provide a quick proof.
  Cover first $\Sigma$ by all balls $B(x,r/2)$ with $x\in\Sigma$. Choose a finite subcover, and let $\cY=\{y_1,\dots,y_M\}$ be the
  set of centers of balls in this subcover.

  We act inductively. Start with $y_1$. Remove from $\cY$ all points $y_i\neq y_1$ such that
  $\dist(y_1,y_j)<r/2$. After this procedure, the balls $B(y_1,r)$ and $B(y_i,r/2)$ for $i>1$ still cover $\Sigma$.

  For the inductive step assume that, for given $n$,
  the balls $B(y_1,r),\dots,B(y_{n-1},r)$ and  $B(y_n,r/2)$, $B(y_{n+1},r/2),\dots$, $B(y_M,r/2)$
  cover $\Sigma$, and 
  there are no indices $i,j$ with $i<n$ and $j\ge n$ such
  that $\dist(y_i,y_j)<r/2$. We remove from $\cY$ points $y_j$ with $j>n$ such that $\dist(y_n,y_j)<r/2$.

  After a finite number of steps we are left with the set $\cX\subset\cY$, such that
  $B(x_i,r)$, $x_i\in\cX$ cover $\Sigma$ and $\dist(x_i,x_j)\ge r/2$ for all $x_i,x_j\in\cX$.
\end{proof}
In case $r<R_0$ we can bound the number of elements in the net via the following.
\begin{proposition}[Bounding $|\cX|$]\label{prop:covering}
  Suppose $\Sigma\in\cCamn$ is such that $E=\cE_p^\ell(\Sigma)<\infty$.
  Let $A=\vol_m(\Sigma)$ and $R_0$ be given by \eqref{eq:r0def}.
  If $r<R_0$, then any $r$-net $\cX$ has
  $|\cX|<2^{n/2+4m}V_m^{-1}r^{-m}A$.
\end{proposition}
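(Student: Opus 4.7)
The plan is to turn the two defining properties of an $r$-net into a standard volume-packing argument, using Proposition~\ref{prop:local_volume_bound} applied at a scale small enough that balls centered at different points of $\cX$ are disjoint.

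First I would note the key geometric observation: since any two distinct points $x,x'\in\cX$ satisfy $\dist(x,x')\ge r/2$, the open balls $B(x,r/4)$ with $x$ ranging over $\cX$ are pairwise disjoint in $\R^n$. In particular, the sets $\Sigma\cap B(x,r/4)$ are also pairwise disjoint subsets of $\Sigma$.

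Next, since $r<R_0$ we also have $r/4<R_0$, so Proposition~\ref{prop:local_volume_bound} applies at scale $r/4$ at every $x\in\cX$, and gives
\[
\vol(\Sigma\cap B(x,r/4))\;>\;2^{-n/2}V_m\,(r/4)^m\;=\;2^{-n/2-2m}V_m r^m.
\]
Summing over $\cX$ and using the disjointness together with $\sum_{x\in\cX}\vol(\Sigma\cap B(x,r/4))\le\vol_m(\Sigma)=A$ yields
\[
|\cX|\cdot 2^{-n/2-2m}V_m r^m\;<\;A,
\]
hence $|\cX|<2^{n/2+2m}V_m^{-1}r^{-m}A$, which is stronger than (and in particular implies) the bound $2^{n/2+4m}V_m^{-1}r^{-m}A$ stated in the proposition.

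The argument is a routine Vitali-type packing estimate, and I do not expect any real obstacle: the only substantive input is the two-sided Ahlfors-type volume estimate of Proposition~\ref{prop:local_volume_bound}, and the only subtlety is simply choosing the correct radius $r/4$ so that the separation condition $\dist(x,x')\ge r/2$ in the definition of an $r$-net forces disjointness of the balls. The slack between $2^{n/2+2m}$ and the stated $2^{n/2+4m}$ gives room for a less tight bookkeeping if one prefers to use slightly larger balls or a cruder form of the local volume inequality.
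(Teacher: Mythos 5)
Your argument is correct and is essentially the paper's own proof: a Vitali-type packing with the pairwise disjoint balls $B(x,r/4)$ combined with the lower bound in Proposition~\ref{prop:local_volume_bound}, and the volume of $\Sigma$ as the upper bound. Your bookkeeping even gives the slightly sharper constant $2^{n/2+2m}$ (the paper crudely records $2^{-n/2-4m}V_mr^m$ for the volume of each small ball), which of course implies the stated bound.
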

\begin{proof}
  By the local volume bound (Proposition~\ref{prop:local_volume_bound})
  the balls $B(x_i,r/4)\cap\Sigma$, $x_i\in\cX$ have volume at least $2^{-n/2-4m}V_m r^m$, and are pairwise
  disjoint. So the total volume of $\bigcup_{x_i\in\cX} B(x_i,r/4)\cap\Sigma$ is at least
  $2^{-n/2-4m}V_mr^m|\cX|$. This quantity does not exceed the volume of $\Sigma$.
\end{proof}
Essentially the same argument yields the following result.
\begin{proposition}\label{prop:sigma_bound}
  Let $\Sigma$ be as in Proposition~\ref{prop:covering}.
  Suppose $\sigma>0,r>0$ are such that $(\sigma+1/4) r<R_0$. Let $\cX$ be an $r$-net. Each
  ball $B(x,\sigma r)$ for $x\in\Sigma$ contains at most $\cT_m(\sigma)$ points from $\cX$, where
  \[\cT_m(\sigma)=2^{3n/2+4m}(\sigma+1/4)^m.\]
\end{proposition}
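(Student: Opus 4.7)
The plan is to mimic almost verbatim the pigeonhole argument used in the proof of Proposition~\ref{prop:covering}, applying the local volume bound simultaneously on small disjoint balls around the net points and on one large ball that contains them all.

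I would start by fixing $x \in \Sigma$ and letting $\{x_1,\ldots,x_k\}$ denote the (necessarily finite) subset $\cX \cap B(x,\sigma r)$. The goal is to prove $k \le \cT_m(\sigma)$. Since $\cX$ is an $r$-net, the distances $\dist(x_i,x_j)$ are all $\ge r/2$, so the open balls $B(x_i,r/4)$ are pairwise disjoint. By the triangle inequality each of these balls is contained in $B(x,(\sigma+1/4)r)$, and hence after intersecting with $\Sigma$ we have a disjoint union
\[
\bigsqcup_{i=1}^{k} \bigl(\Sigma \cap B(x_i,r/4)\bigr) \subset \Sigma \cap B(x,(\sigma+1/4)r).
\]

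Next I would apply Proposition~\ref{prop:local_volume_bound} twice: on the small balls $B(x_i, r/4)$, using that $r/4 < R_0$, to get the lower bound $\vol(\Sigma \cap B(x_i, r/4)) > 2^{-n/2-2m} V_m r^m$ (and, as in the proof of Proposition~\ref{prop:covering}, absorbing any additional factor into the exponent), and on the large ball $B(x,(\sigma+1/4)r)$, using the hypothesis $(\sigma+1/4)r < R_0$, to get
\[
\vol\bigl(\Sigma \cap B(x,(\sigma+1/4)r)\bigr) < 2^{n-1} V_m (\sigma+1/4)^m r^m.
\]
Comparing the sum of the lower bounds with the upper bound and cancelling the common factor $V_m r^m$ yields an inequality of the form $k < 2^{3n/2+4m}(\sigma+1/4)^m = \cT_m(\sigma)$.

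There is no conceptual obstacle here: the only thing to keep track of is the bookkeeping of the powers of~$2$, chosen to match the constants already used in Proposition~\ref{prop:covering}. The crucial point one should not forget to verify is that both applications of the local volume bound are legal, which is exactly the content of the hypothesis $(\sigma+1/4)r < R_0$ (the inequality $r/4 < R_0$ being a trivial consequence since $\sigma > 0$).
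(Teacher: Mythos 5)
Your argument is correct and is essentially the paper's own proof: pack the disjoint balls $B(x_i,r/4)$ around the net points into $B(x,(\sigma+1/4)r)$ and apply the local volume bound (Proposition~\ref{prop:local_volume_bound}) at both scales, which is legitimate precisely because $(\sigma+1/4)r<R_0$. The constant bookkeeping also works out: your lower bound $2^{-n/2-2m}V_mr^m$ per small ball gives $k<2^{3n/2+2m-1}(\sigma+1/4)^m\le\cT_m(\sigma)$, consistent with (indeed slightly sharper than) the paper's estimate.
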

\begin{proof}
  The ball $B(x,(\sigma+1/4)r)\cap\Sigma$ has volume at most $2^{n-1}V_m(\sigma+1/4)^mr^m$. All balls of radius
  $r/4$ with centers at $x_i\in\cX$ such that $\dist(x,x_i)<\sigma r$ are pairwise disjoint,
  belong to $B(x,(\sigma+1/4)r)$, and have volue at least $2^{-n/2-4m}V_mr^m$. Hence,
  the number of points in $\cX$ at distance at most $\sigma r$ to $x$ is
  bounded by $2^{3n/2+4m-1}(\sigma+1/4)^m$.
\end{proof}
From now on, we set
\[\cT(\sigma)=\cT_2(\sigma)=2^{3n/2+7}(\sigma+1/4)^2.\]

\subsection{Good arcs}

\begin{definition}\label{def:good_arc}
  Let $\cX=\{x_1,\dots,x_N\}$ be an $r$-net.
  Let $\cI$ be a subset of pairs $(i,j)$ with $1\le i<j\le N$. 
  A collection $\cG$ of arcs $\gamma_{ij}$, $(i,j)\in\cI$ smoothly embedded in $\Sigma$ and connecting $x_i$
  with $x_j$ is called a collection of \emph{good arcs associated to $\cX$} if:
  \begin{itemize}
    \item $\cI=\{(i,j)\colon i\neq j,\ ||x_i-x_j||<4r\}$;
    \item for all $(i,j)\in\cI$, $\gamma_{ij}$ has length less than $2\dist(x_i,x_j)$;
    \item $\gamma_{ij}=\gamma_{ji}$;
    \item if $\gamma_{ij}\neq \gamma_{k\ell}$, then $\gamma_{ij}$ is transverse to $\gamma_{k\ell}$.
  \end{itemize}
  A collection of good arcs is called \emph{tame}, if it additionally satisfies the following two conditions.
  \begin{enumerate}[label=(G-\arabic*)]
    \item Every connected component of $\Sigma\setminus \Gamma$ with $\Gamma=\bigcup \gamma_{ij}$ is homeomorphic to an open set of $\R^2$;\label{item:G1}
    \item Unless $\gamma_{ij}=\gamma_{k\ell}$, the curves $\gamma_{ij}$ and $\gamma_{k\ell}$ intersect transversally at at most $\cT(17)$ points.\label{item:G2}
  \end{enumerate}
\end{definition}
One should think of a collection of good arcs as an analog of a collection of geodesics. Condition~\ref{item:G2} is automatically
satisfied if $\gamma_{ij}$ are geodesics whose length is less than the geodesic radius.

\begin{proposition}\label{prop:good_arcs}
  Suppose $4r<R_0$.
  For each $r$-net $\cX$ there exists an associated collection of good arcs.
\end{proposition}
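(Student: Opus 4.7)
The plan is to build each arc by pulling back a straight-line segment in the tangent plane at one endpoint through the graph patch map, and then to perturb slightly to enforce mutual transversality. Fix a pair $(i,j)\in\cI$ with $i<j$ and write $H_i=T_{x_i}\Sigma$. Since $||x_i-x_j||<4r<R_0$, I apply Corollary~\ref{cor:regularity} at $x_i$ with local radius $4r$. By~\ref{item:pi} the projection $\pi_{x_i}$ is defined on $\Sigma\cap B(x_i,4r\sqrt{2})$, so $p_j:=\pi_{x_i}(x_j)\in H_i$ is well defined; by~\ref{item:lip} it satisfies $|p_j|\le||x_i-x_j||<4r$, and therefore lies in the domain $B^{H_i}(x_i,4r)$ of the graph map $\phi_{x_i}$ guaranteed by~\ref{item:phi}.

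Let $\ell_{ij}\subset H_i$ be the straight segment from $x_i$ (identified with the origin of $H_i$) to $p_j$, and put $\tilde\gamma_{ij}:=\phi_{x_i}(\ell_{ij})$. Because $\phi_{x_i}$ is an injective $C^{1,\alpha}$ map, $\tilde\gamma_{ij}$ is a $C^{1,\alpha}$-embedded arc in $\Sigma$ from $x_i$ to $x_j$; by~\ref{item:lip}, $\phi_{x_i}$ is $\sqrt{2}$-Lipshitz, so the length of $\tilde\gamma_{ij}$ is at most $\sqrt{2}\,|p_j|\le\sqrt{2}\,||x_i-x_j||<2\,\dist(x_i,x_j)$. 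I set $\tilde\gamma_{ji}:=\tilde\gamma_{ij}$, and this takes care of the first three bullet conditions in Definition~\ref{def:good_arc} with strict slack in the length bound.

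It remains to upgrade the family $\{\tilde\gamma_{ij}\}$ so that distinct arcs meet transversally, which is the last bullet in Definition~\ref{def:good_arc}. Since $\Sigma$ is a two-dimensional $C^{1,\alpha}$ surface and each arc is one-dimensional, any fixed pair of arcs can be made transverse by an arbitrarily $C^1$-small perturbation of one of them, supported away from its endpoints; this is a standard application of Thom transversality on the surface. Carrying out such perturbations one unordered pair at a time, and keeping each perturbation small enough that (a) the perturbed arc remains embedded in $\Sigma$, (b) its length stays strictly below $2\,\dist(x_i,x_j)$, and (c) transversality with pairs already processed is preserved (which is automatic, since transverse intersection is an open condition in the $C^1$ topology), produces the desired family of good arcs. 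The main obstacle is really organisational: the geometric construction is immediate from the graph patch, and the $\sqrt{2}<2$ slack in the length estimate provides exactly the room needed to accommodate the transversality perturbations.
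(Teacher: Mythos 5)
Your proposal is correct and matches the paper's own argument: project $x_j$ to the tangent plane at $x_i$, lift the straight segment via $\phi_{x_i}$ to get an arc of length at most $\sqrt{2}\dist(x_i,x_j)<2\dist(x_i,x_j)$, then perturb inductively for pairwise transversality. The only cosmetic difference is that the paper performs the transversality perturbation on the planar projections $\rho$ rather than directly on $\Sigma$, which is equivalent.
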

\begin{proof}
  Take two points $x_i,x_j\in\cX$ with $\dist(x_i,x_j)<4r$. We want to show that there
  exists a curve $\gamma_{ij}$ on $\Sigma$ connecting them with $\ell(\gamma_{ij})<2\dist(x_i,x_j)$.
  To see this, let $y_j=\pi_{x_i}(x_j)$, where $\pi_{x_i}$ is the projection onto $H_i$, the tangent plane to $x_i$; compare Corollary~\ref{cor:regularity}. As $4r<R_0$, by \ref{item:pi}, $\pi_{x_i}$ is defined on $x_j$. Moreover, by \ref{item:phi}, 
  the segment on $H_i$ connecting $x_i$ and $y_j$
  belongs to the domain of $\phi_{x_i}$.

  Let $\rho$ be the segment on $H_i$ connecting $x_i$ and $y_j$. As $\pi_{x_i}$ has Lipshitz constant $1$ by \ref{item:pi},
  we have that $||x_i-y_j||\le\dist(x_i,x_j)$, so the length of $\rho$ is less than or equal to $\dist(x_i,x_j)$.
  By \ref{item:dphi} we infer that the length of $\gamma_{ij}:=\phi_{x_i}(\rho)$ is at most 
  $\sqrt{2}\dist(x_i,x_j)<2\dist(x_i,x_j)$.

  We construct all the curves in $\cG$ one by one, making a newly constructed curve transverse to the previous ones. Note that transversality of $\gamma_{ij}$ to some $\gamma_{k\ell}$ is equivalent to transversality of $\rho=\pi_{x_i}(\gamma_{ij})$ to $\pi_{x_i}(\gamma_{k\ell})$. Therefore, we can always perturb $\rho$, which is a planar curve, to be transverse to all
  previously constructed curves by standard transversality arguments.
\end{proof}
Our goal is to show that there exists a tame collection of good arcs. In Subsection~\ref{sub:G1} we shall
deal with Condition~\ref{item:G1}, while in Subsection~\ref{sub:G2} we deal with \ref{item:G2}.
\subsection{On the property~\ref{item:G1}}\label{sub:G1}
\begin{lemma}\label{lem:G1}
  If $22\frac14r<R_0$, a collection of good arcs $\cG$
    satisfies \ref{item:G1}.
\end{lemma}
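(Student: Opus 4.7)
My plan is as follows. Let $C$ be a connected component of $\Sigma\setminus\Gamma$ and fix $p \in C$. I want to show that $C$ sits inside a single graph patch, whence by Corollary~\ref{cor:regularity} it is homeomorphic via $\pi_{x_0}$ to an open subset of $H = T_{x_0}\Sigma \cong \R^2$. The hypothesis $22\tfrac14\,r<R_0$ is what guarantees that the graph patch and all auxiliary maps are defined on balls large enough to contain the construction below.

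Pick a net point $x_0 \in \cX$ with $\|p-x_0\|<r$ and write $\pi=\pi_{x_0}$, $\phi=\phi_{x_0}$, $y=\pi(p)$. I build a closed loop $\tilde L\subset\Gamma$ enclosing $p$ inside the graph patch around $x_0$ as follows. Consider the planar circle $S=\{w\in H : \|w-y\|=2r\}$. For each $w\in S$, the covering property of $\cX$ supplies a net point $x_j$ with $\|\phi(w)-x_j\|<r$, and then $y_j:=\pi(x_j)$ satisfies $\|y_j-w\|<r$, placing $y_j$ in the annulus of radii $(r,3r)$ around $y$. Extract a cyclically ordered finite subfamily $y_{j_1},\dots,y_{j_k}$ whose $r$-balls cover $S$. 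Consecutive members share a point of $S$, so $\|y_{j_a}-y_{j_{a+1}}\|\le 2r$, and the Lipschitz bound $\|x_{j_a}-x_{j_{a+1}}\|\le\sqrt 2\,\|y_{j_a}-y_{j_{a+1}}\|$ (from $\phi$ being $\sqrt{2}$-Lipschitz on the graph patch) gives $\|x_{j_a}-x_{j_{a+1}}\|<4r$; thus the good arcs $\gamma_{j_a j_{a+1}}$ exist. Let $\tilde L$ be their cyclic concatenation and $L:=\pi(\tilde L)\subset H$. Since $\gamma_{j_a j_{a+1}}=\phi_{x_{j_a}}(\rho)$ for a segment $\rho$ of length at most $2\sqrt 2\,r$ and $\phi_{x_{j_a}}$ is $\sqrt 2$-Lipschitz, $\gamma_{j_a j_{a+1}}\subset B(x_{j_a},4r)$, and hence $\pi(\gamma_{j_a j_{a+1}})\subset B^H(y_{j_a},4r)\subset B^H(x_0,8r)$; everything comfortably lies inside the domains of $\pi$ and $\phi$.

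The remaining step is to show that $L$ has nonzero winding number around $y$. Granting this, $y$ lies in a bounded component $D$ of $H\setminus L$, which must sit inside $B^H(x_0,8r)$; setting $V=\phi(D)\subset B(x_0,8\sqrt 2\,r)$, we obtain an open region of $\Sigma$ with $\partial V\subset\tilde L\subset\Gamma$ that is compactly contained in the graph patch $\Sigma\cap B(x_0,22\tfrac14\,r)$. Any path in $C$ starting at $p$ stays in $\Sigma\setminus\Gamma$ and therefore cannot cross $\tilde L$; since $V$ is separated from the boundary of the graph patch by $\tilde L$, the path cannot leave $V$, so $C\subset V$. Because $\pi\colon V\to D$ is a homeomorphism onto an open subset of $H\cong\R^2$ and $C$ is open in $V$, the conclusion follows. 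The main obstacle is the winding-number claim: the projected arcs are not straight, and $\tilde L$ may carry transverse self-intersections (good arcs are only required to meet transversally, not to be disjoint), so the argument must accumulate the total angular change of $\arg(\cdot-y)$ along $L$, using that consecutive endpoints $y_{j_a},y_{j_{a+1}}$ subtend at most a $60^\circ$ angle at $y$ (both lying in a common ball $B^H(w_a,r)$ from the cover of $S$) together with control on the deviation of each $\sqrt 2$-Lipschitz arc from its chord.
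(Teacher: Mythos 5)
Your overall strategy is the same as the paper's: surround the point by a closed loop built from good arcs inside a single graph patch, show that the projected loop has nonzero winding number about the projected point, and conclude via the Jordan curve theorem that the component is trapped in a planar region. However, the one step you explicitly leave open --- the winding-number claim --- is precisely the heart of the lemma, and at the scale you chose it cannot be completed by the argument you sketch. Your loop is built over a circle of radius $2r$ around $y=\pi(p)$, so its ``vertices'' $y_{j_a}$ are only guaranteed to lie at distance between $r$ and $3r$ from $y$, while a good arc joining $x_{j_a}$ to $x_{j_{a+1}}$ is constrained \emph{only} by the length bound $\ell(\gamma)<2\dist(x_{j_a},x_{j_{a+1}})\le 4\sqrt2\,r$; there is no ``control on the deviation of a $\sqrt2$-Lipschitz arc from its chord'' beyond this. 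An arc of length up to $\approx 5.7r$ whose endpoints project to points only $r$ away from $y$ can approach $y$ arbitrarily closely and can wind around it negatively, so the $60^{\circ}$-per-step chord-angle bookkeeping does not bound the argument increment of the actual arcs, and the total winding of $L$ about $y$ is not controlled. (A second, smaller slip: you assert $\gamma_{j_aj_{a+1}}=\phi_{x_{j_a}}(\rho)$ for a straight segment $\rho$; the lemma must hold for an arbitrary collection of good arcs, which are only length-bounded, so you may not assume this normal form --- though the containment in a ball you extract from it can be recovered from the length bound alone.)

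The paper resolves exactly this difficulty by choosing the scale so that the arcs are forced to stay away from the center and inside controlled half-planes: the loop is modeled on a regular $38$-gon of side $r$, whose vertices lie at distance $\approx 6r$ from $x_i$, while each connecting arc has length at most $7r$. Then each projected arc $\rho_j$ must miss $B^{H_i}(x_i,r)$ and a half-plane $H_{ij}^+$ (Lemma~\ref{lem:misses}), and any three or four consecutive arcs still miss a nonempty region of the plane, which rules out a full negative turn and yields a positive argument increment blockwise (Lemma~\ref{lem:positive_increment}), hence positive winding for the whole cycle (Corollary~\ref{cor:delta_def}). To repair your proof you would need to enlarge your circle (and the number of sample points on it) so that the distance from $y$ to the endpoints dominates the possible arc lengths by a sufficient margin, and then prove an analogue of Lemmas~\ref{lem:misses} and~\ref{lem:positive_increment}; as written, the winding-number step is a genuine gap.
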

\begin{proof}
  \begin{figure}
    \includegraphics[width=0.9\linewidth]{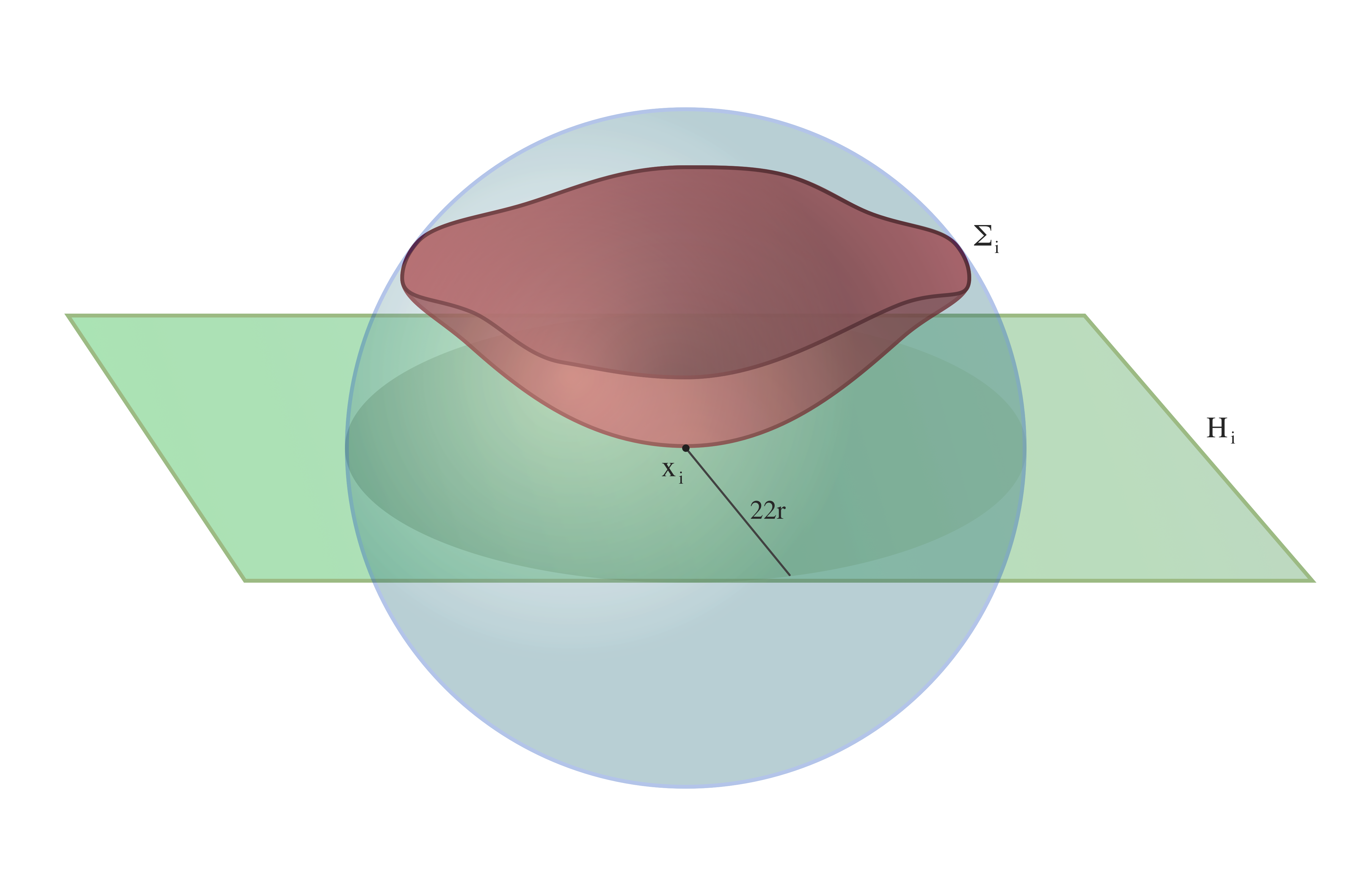}
    \caption{The set $H_i$ and $\Sigma_i$ of the proof of Lemma~\ref{lem:G1}.}\label{fig:G10}
  \end{figure}
  We use the notation of Corollary~\ref{cor:regularity}; compare Figure~\ref{fig:G10}. Let $x_i\in\cX$.
  Let $H_i$ be the plane tangent to $\Sigma$ at $x_i$. Let $\Sigma_i=\Sigma\cap B(x,22r)$. As $22r<R_0$, properties \ref{item:phi}--\ref{item:lip} are satisfied. 
  We let $U_i=\pi_{x_i}(\Sigma\cap B(x_i,22r))$. We have
  \[B^{H_i}(x_i,15r)\subset B^{H_i}(x_i,22r/\sqrt{2})\subset U_i\subset B^{H_i}(x_i,22r).\]

  Consider a regular $38$-gon on $H_i$ with center $x_i$ and side length $r$. Denote by $u_1,\dots,u_{38}$ its vertices, so that
  $||u_j-u_{j+1}||_{H_i}=r$. We have $||u_j-x_i||_{H_i}=\frac{r}{2\sin\frac{\pi}{38}}\sim 6.05r$. In particular
  \begin{equation}\label{eq:ujxi}
    6r<||u_j-x_i||_{H_i}<7r.
  \end{equation}
  \begin{figure}
    \includegraphics[width=0.5\linewidth]{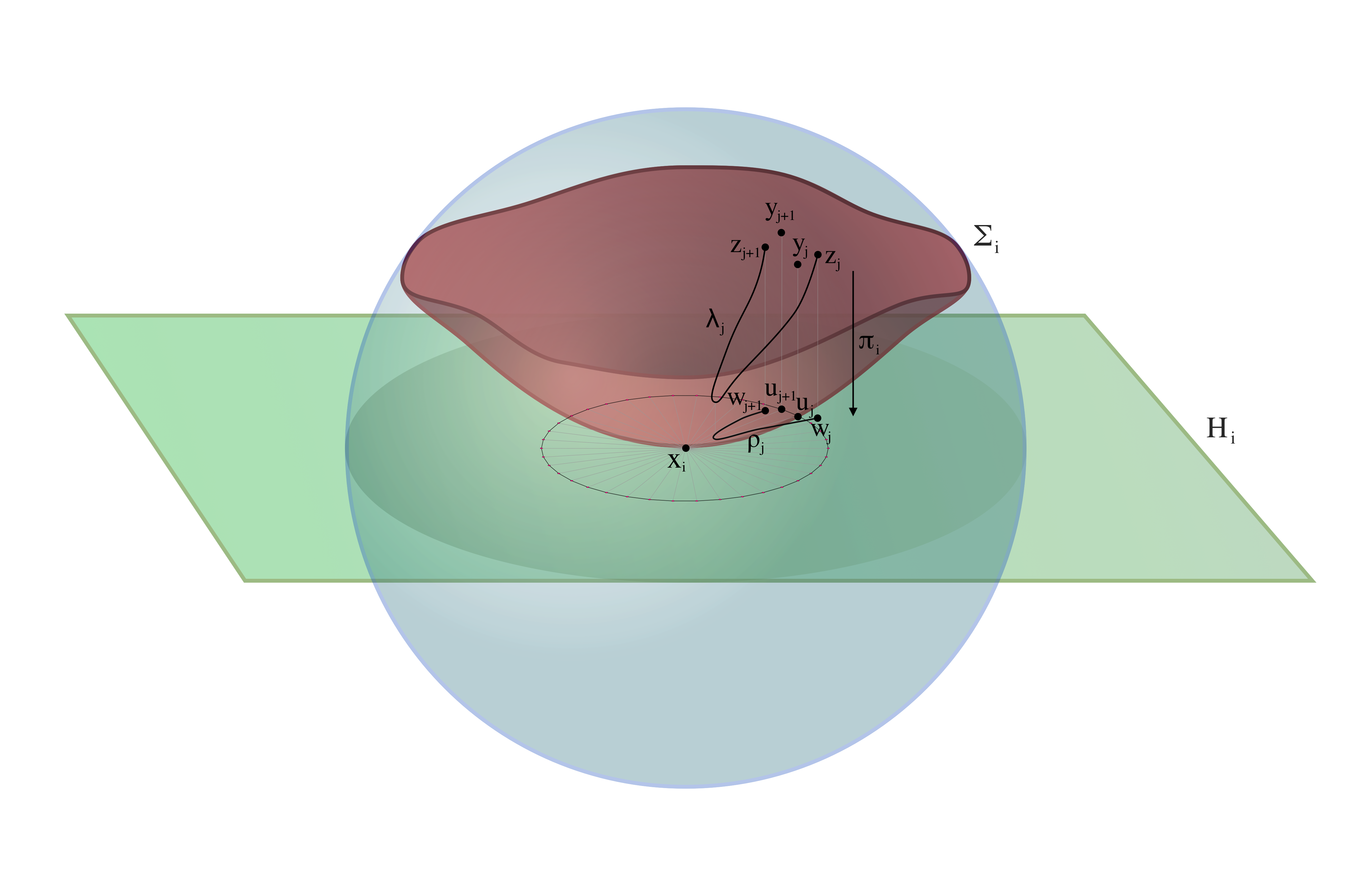}\hskip 1cm
    \includegraphics[width=0.3\linewidth]{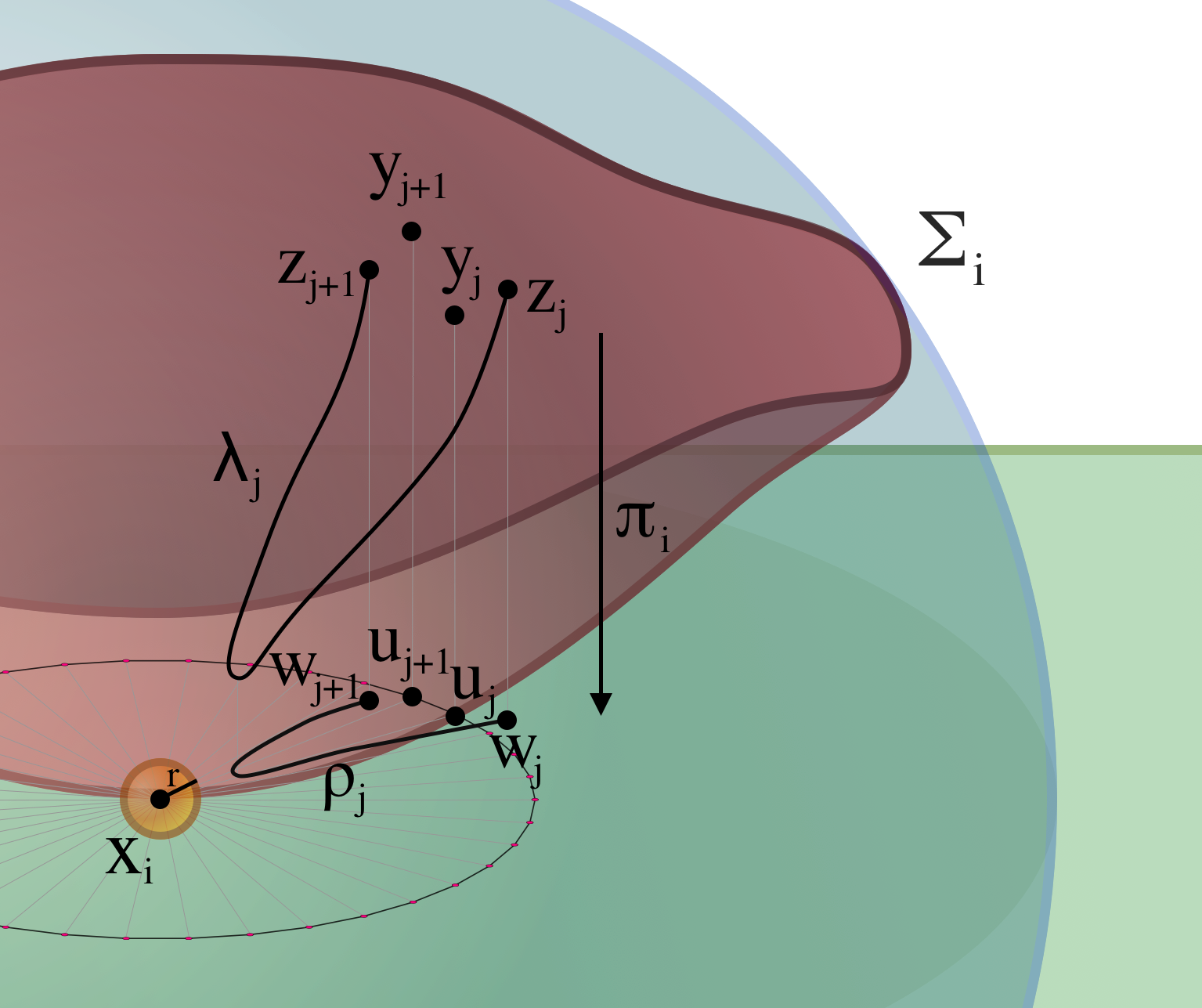}
    \caption{Notation of Subsection~\ref{sub:G1}.}\label{fig:G1}
  \end{figure}
  Let $y_j=\phi_{x_i}(u_j)\in\Sigma_i$. By \ref{item:lip}:
  \begin{equation}\label{eq:yjyjp}
    r<\dist(y_j,y_{j+1})<r\sqrt{2}.
  \end{equation}
  By the definition of $\cX$, for any $y_j$ there exists an element $z_j\in\cX$
  such that $\dist(z_j,y_j)<r$. In particular, by the triangle inequality and \eqref{eq:yjyjp}:
  \begin{equation}\label{eq:zjzjp}
    \dist(z_j,z_{j+1})<(2+\sqrt{2})r,\ \ \dist(x_i,y_j)<7r\sqrt{2}<10r.
  \end{equation}
  As $\cG$ is a collection of good arcs, and $\dist(z_j,z_{j+1})<4r$, there exists a curve $\lambda_j\in\cG$ that connects $z_j$ and $z_{j+1}$.
  The length $\lambda_j$ is at most $2\dist(z_j,z_{j+1})<(4+2\sqrt{2})r<7r$.

  Denote $w_j=\pi_{x_i}(z_j)$ and let $\rho_j=\pi_{x_i}(\lambda_j)$. 
  \begin{lemma}\label{lem:belongs_to}
    We have $z_j\in B(x_i,11r)$. Moreover, $\lambda_j\subset B(x_i,15r)$  and
    $\rho_j$ belongs to $B^{H_i}(x_i,15r)$.
  \end{lemma}
  \begin{proof}
    From \eqref{eq:ujxi} we read off that $\dist(x_i,y_j)<10r$. As $\dist(y_j,z_j)<r$, we conclude that $\dist(x_i,z_j)<11r$.

    The curve $\lambda_j$ has length at most $7r$. No point on $\lambda_j$ can be further from $x_i$ than $11r+\frac72r<15r$. Indeed, if $x$ is outside $B(x_i,15r)$, then the length of the part of $\lambda_j$ from $z_j$ to $x$ and the length
    of the part of $\lambda_j$ from $x$ to $z_{j+1}$ are both at least $4r$, contributing to the length of $\lambda_j$ being at least $8r$.
Therefore,
    $\lambda_j\subset B(x_i,15r)$. 

    Now $\pi_{x_i}(B(x_i,15r))\subset B^{H_i}(x_i,15r)$, so $\rho_j\subset B^{H_i}(x_i,15r)$.
  \end{proof}

  As $\dist(z_j,y_j)<r$, we have $||w_j-u_j||<r$. By \eqref{eq:ujxi} and the
  triangle inequality:
  \begin{equation}\label{eq:wjxi}
    5r<||w_j-x_i||<8r.
  \end{equation}
  \begin{lemma}\label{lem:misses}
    Let $H_{ij}^+$ be the half-plane cut off from $H_i$ by the line parallel to the segment joining $u_j$ and $u_{j+1}$ and passing through
    $x_j$ such that $u_j,u_{j+1}\in H_{ij}^+$; see Figure~\ref{fig:misses}.
    The curve $\rho_j$ misses $B^{H_i}(x_i,r)$ and $H_{ij}^+$. In particular, $\lambda_j$ is disjoint from $B(x_i,r)$.
  \end{lemma}
  \begin{figure}
    \includegraphics[width=0.4\linewidth]{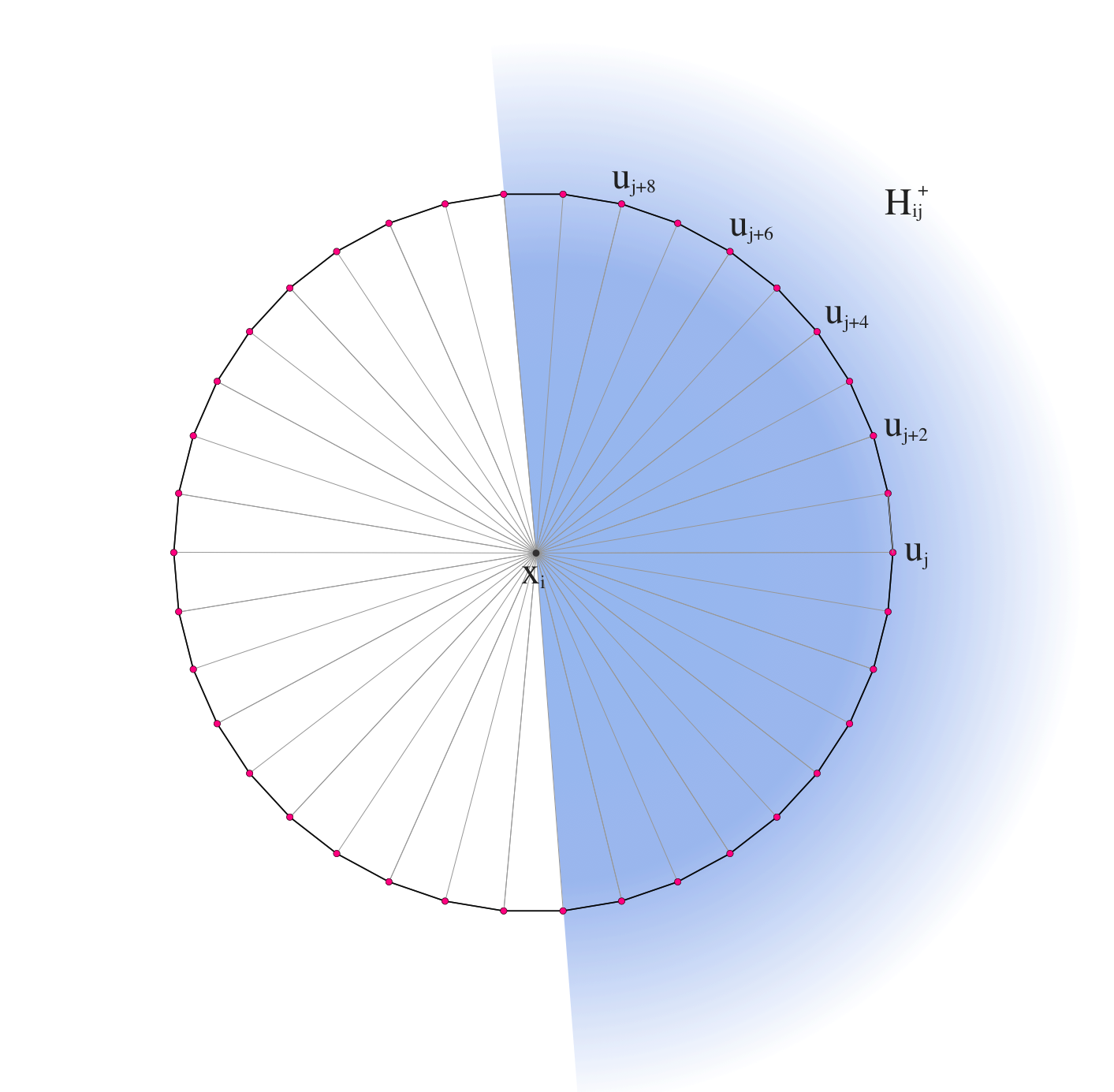}\hskip1cm\includegraphics[width=0.4\linewidth]{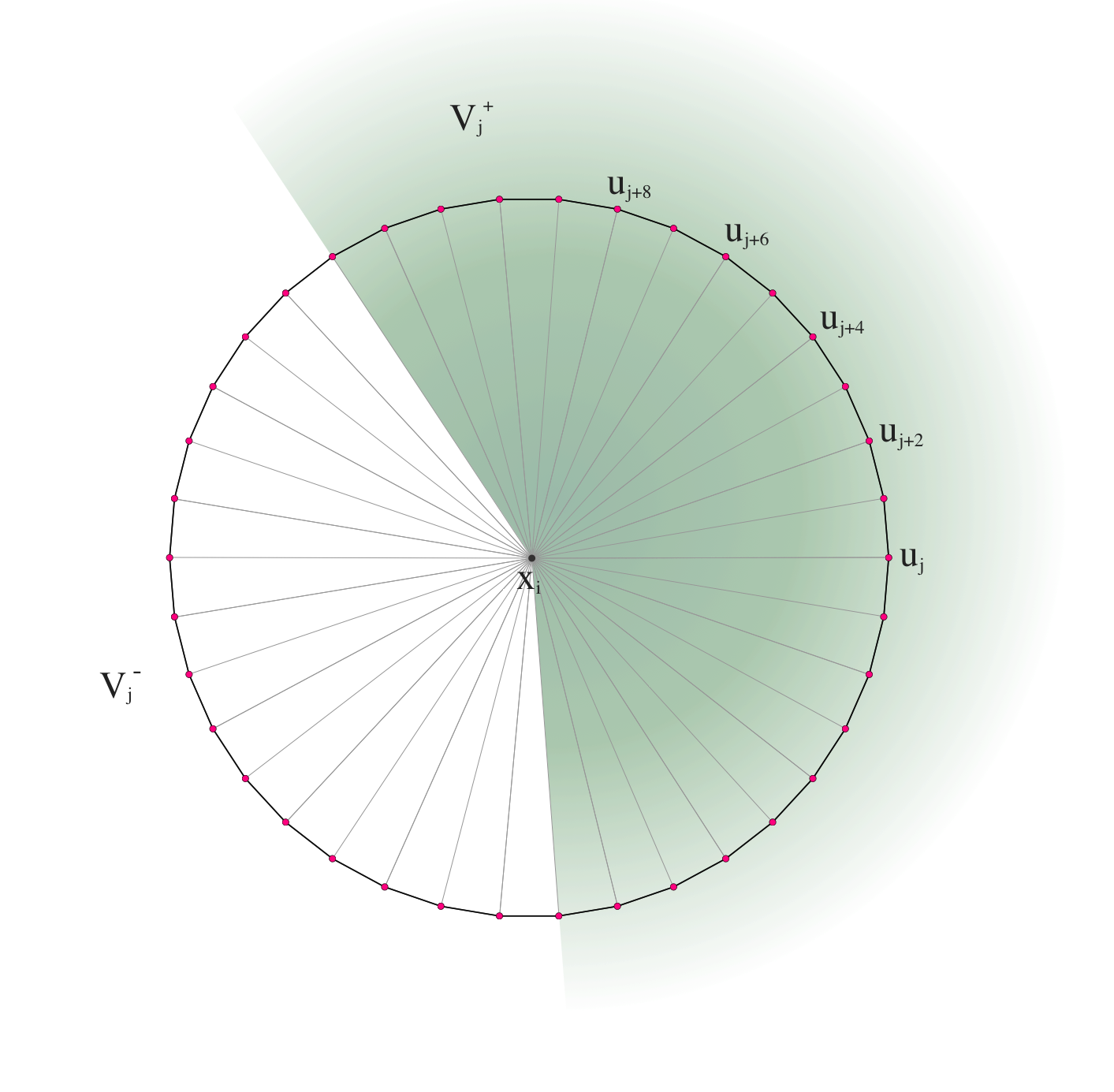}
    \caption{The set $H_{ij}^+$ (left) and $V_{j}^+$ (right) of the proof of Lemma~\ref{lem:misses} and~\ref{lem:positive_increment}.}\label{fig:misses}
  \end{figure}
  \begin{proof}[Proof of Lemma~\ref{lem:misses}]
    Note that $\pi_{x_i}$ being Lipshitz with Lipshitz constant $1$ implies that the length of $\rho_j$ is at most $7r$; see \ref{item:lip}.
    Suppose towards contradiction that $\rho_j$ passes through a point $z\in B(x_i,r)$. We have $||w_j-z||>4r$ and $||w_{j+1}-z||>4r$
    by \eqref{eq:wjxi} and the triangle inequality, hence the length of $\rho_j$ is at least $8r$. The contradiction shows
    that $\rho_j$ misses the ball $B^{H_i}(x_i,r)$. 

    Suppose now $\lambda_j$ hits the ball $B(x_i,r)$, that is, there exists a point $x\in\lambda_j\cap B(x_i,r)$. Then,
    $\pi_{x_i}(x)\in B^{H_i}(x_i,r)$ by \ref{item:lip} and so $\rho_j$ hits $B^{H_i}(x_i,r)$, contradicting what we have already proved. This shows
    that $\lambda_j$ is disjoint from $B(x_i,r)$.

    To prove that $\rho_j$ misses $H_{ij}^+$ is analogous. The distance of $u_j$ to the boundary of $H_{ij}^+$ is equal to 
    $r\sqrt{1/4\sin^2(\pi/38)-1/4}$,
    and it is greater than $6r$. Hence, the distance of $w_j$ to $\partial H_{ij}^+$ is at least $5r$. That is, if $\rho_j$
    leaves $H_{ij}^+$, then its length must be at least $10r$. Contradiction.
  \end{proof}
  Continuing the proof of Lemma~\ref{lem:G1}, we prove the next result.
  \begin{lemma}\label{lem:positive_increment}
    For any $j$, the increment of the argument along $\rho_j\cup\rho_{j+1}\cup\dots\cup\rho_{j+s}$ with $s=3,4$ is positive.
  \end{lemma}
  \begin{proof}
    The proof works for larger $s$, we only need the case $3$ and $4$. We prove for $s=3$, leaving the analogous case $s=4$ to the reader.
    Define: 
    $\ol{\rho}=\rho_j\cup\rho_{j+1}\cup\rho_{j+2}$. 
    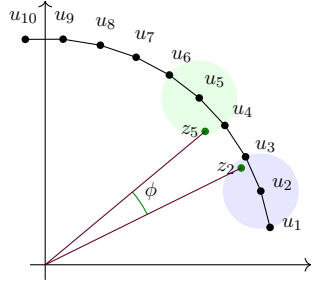
\begin{figure}
      \begin{tikzpicture}
	\draw[->] (-0.2,0) -- (3.5,0);
	\draw[->] (0,-0.2) -- (0,3.5);
	\foreach \i in {1,...,10}
	{
	  \coordinate (A\i) at (9.5*\i:3);
	}

	\fill[blue!10] (A2) circle (0.5);
	\fill[green!10] (A5) circle (0.5);

	\foreach \i in {1,...,10}
	{
	  \fill[black] (A\i) circle (0.05);
	  \draw (A\i) ++ (9.5*\i:0.3) node [scale=0.7] {$u_{\i}$};
	}
	\draw (A1) -- (A2) -- (A3) -- (A4) -- (A5) -- (A6) -- (A7) -- (A8) -- (A9) -- (A10);
	\draw (A2) ++ (130:0.4) coordinate (B);
	\draw (A5) ++ (280:0.45) coordinate (C);
	\begin{scope}
	  \clip (0,0) -- (B) -- (C) -- (0,0);
	\draw[green!50!black] (0,0) circle (1.5);
	\draw (0,0) ++ (35:1.7) node [scale=0.7] {$\phi$};
	\end{scope}
	\fill[green!50!black] (B) circle (0.05);
	\fill[green!50!black] (C) circle (0.05);
	\draw (B) ++ (-0.2,0) node [scale=0.7] {$z_2$};
	\draw (C) ++ (-0.2,0) node [scale=0.7] {$z_5$};
	\draw[thin,purple!50!black] (0,0) -- (B);
	\draw[thin,purple!50!black] (0,0) -- (C);
      \end{tikzpicture}
      \caption{Proof of Lemma~\ref{lem:positive_increment}. The points $z_2$ and $z_5$ belong to the disks with centers at
      $u_2$, respectively $u_5$. No matter where the points $z_2$ and $z_5$ are, the oriented angle $\phi$ is positive.}\label{fig:olrho}
    \end{figure}
    A geometric argument reveals that the oriented angle between the lines $\ol{x_iw_j}$ and $\ol{x_iw_{j+3}}$ is positive;
    compare Figure~\ref{fig:olrho}.
    Therefore, it is enough to prove that $\ol{\rho}$ does not make a full negative turn while going from $w_j$ to $w_{j+3}$.

    To this end, let $V^+_j=H_{ij}^+\cup H_{i,j+1}^+\cup H_{i,j+2}^+\cup H_{i,j+3}^+$ and $V^-_j=H_i\setminus V^+_j$. Then $V^-_j$ is non-empty $\ol{\rho}$
    misses $V^-_j$. That is, the curve $\ol{\rho}$ does not go around the point $x_j$ and the increment of the angle
    along $\ol{\rho}$ is the same as the oriented angle between the corresponding lines.
  \end{proof}
  \begin{corollary}\label{cor:delta_def}
    Set $\Delta=\rho_1\cup\dots\cup\rho_{38}$. The increment of the argument along $\Delta$ is a positive multiple of $2\pi$.
  \end{corollary}
  From this it follows that the winding index of $\Delta$ around 
  $x_i$ along is positive. 
  \begin{remark}
    With a little more care, we could prove that the winding
    index of $\Delta$ around $x_i$ is $1$. We do not need this result.
  \end{remark}
  \begin{corollary}\label{cor:jordan}
    The cycle $\Delta$ cuts $H_i$ into a finite number of components and $B^{H_i}(x_i,r)$ belongs to a bounded connected component of $H_i\setminus \Delta$.
  \end{corollary}
  \begin{proof}[Proof of Corollary~\ref{cor:jordan}]
    The cycle $\Delta$ is closed and has finitely many self-intersections. By Jordan curve theorem, $H_i\setminus\Delta$ consists of a finite number of bounded connected components and one unbounded connected component. As the winding index of $\Delta$ around $x_i$ is positive, 
    $x_i$ cannot possibly
    belong to the unbounded connected component. By Lemma~\ref{lem:misses}, $\Delta$
    is disjoint from $B^{H_i}(x_i,r)$. Hence, the whole of
    $B^{H_i}(x_i,r)$ belongs to the same connected component of $H_i\setminus\Delta$.
  \end{proof}
  We denote by $S_i\subset\Sigma$ the connected component of $\phi_{x_i}(H_i\setminus\Delta)$ containing $B(x_i,r)\cap\Sigma$. To conclude the proof of Lemma~\ref{lem:G1}, we choose a point $x\in\Sigma\setminus \Gamma$, where
  we recall that $\Gamma=\bigcup \gamma_{ij}$.
  By the definition of $\cX$, there exists a point $x_i\in\cX$ such that $x\in B(x_i,r)$. The connected component of $\Sigma\setminus\Gamma$
  containing $x$ is a subset of $S_i$, which is homeomorphic, via $\pi_{x_i}$ to an open subset of $H_i$. This concludes the proof.
\end{proof}
For future use we note the following corollary of the proof
of Lemma~\ref{lem:G1}.
\begin{corollary}\label{cor:for_future_use}
  Any connected component of $\Sigma\setminus\Gamma$ belongs
  to $B(x_i,22r)$ for some $x_i\in\cX$.
\end{corollary}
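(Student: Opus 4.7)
The plan is to extract this statement directly from the proof of Lemma~\ref{lem:G1}: that argument already produces, for each point $x\in\Sigma\setminus\Gamma$, a region $S_i\subset\Sigma$ inside which the connected component of $x$ must live. All that is missing is the observation that $S_i$ itself sits inside $B(x_i,22r)$, and this is built into the construction.

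First I would pick a connected component $C$ of $\Sigma\setminus\Gamma$ and any point $x\in C$. Using the $r$-net property of $\cX$, I choose $x_i\in\cX$ with $x\in B(x_i,r)$, and reconstruct $\Delta\subset H_i$, the bounded component $W_i$ of $H_i\setminus\Delta$ containing $x_i$, and $S_i=\phi_{x_i}(W_i)$ exactly as in the proof of Lemma~\ref{lem:G1}.

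Next I would verify two claims. First, $C\subset S_i$: each arc $\rho_k$ making up $\Delta$ is the $\pi_{x_i}$-image of some $\lambda_k\in\cG$, so $\phi_{x_i}(\Delta)\subset\Gamma$. The topological boundary of $S_i$ in $\Sigma$ equals $\phi_{x_i}(\partial W_i)\subset\phi_{x_i}(\Delta)\subset\Gamma$. A connected subset of $\Sigma\setminus\Gamma$ meeting $S_i$ at $x$ cannot cross this boundary, so it stays inside $S_i$. Second, $S_i\subset B(x_i,22r)$: Lemma~\ref{lem:belongs_to} gives $\Delta\subset B^{H_i}(x_i,15r)$, hence $W_i\subset B^{H_i}(x_i,15r)\subset U_i$, and $\phi_{x_i}$ maps $U_i$ into $\Sigma\cap B(x_i,22r)$ by \ref{item:U}. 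Combining the two yields $C\subset B(x_i,22r)$.

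There is no serious obstacle here: the corollary is essentially bookkeeping within the proof of Lemma~\ref{lem:G1}. The only point requiring care is the justification that $\partial S_i\subset\Gamma$, so that $C$ cannot slip out of the single graph patch at $x_i$ into a distant region of $\Sigma$ not controlled by this construction.
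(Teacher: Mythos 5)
Your proposal is correct and follows essentially the same route as the paper: take $x$ in the component, pick $x_i$ with $\dist(x_i,x)<r$, observe that the component is trapped in $S_i=\phi_{x_i}(W_i)$ because its boundary $\phi_{x_i}(\partial W_i)\subset\lambda_1\cup\dots\cup\lambda_{38}\subset\Gamma$, and then use $W_i\subset B^{H_i}(x_i,15r)$ together with the Lipschitz bound $15\sqrt{2}<22$ on $\phi_{x_i}$ (the paper cites \ref{item:lip} rather than \ref{item:U} here, a negligible difference). Your explicit justification that $\partial S_i\subset\Gamma$ is exactly the point the paper's proof uses implicitly when it declares $\phi_{x_i}(S_i)$ a connected component of $\Sigma\setminus(\lambda_1\cup\dots\cup\lambda_{38})$, so there is no gap.
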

\begin{proof}
  With the notation of the proof of Lemma~\ref{lem:G1},
  we let $S$ be a connected component of $\Sigma\setminus\Gamma$.
  Take $x\in S$ and let $x_i\in\cX$ be such that $\dist(x_i,x)<r$.
  Then $x\in \phi_{x_i}(S_i)$ and then, as $\phi_{x_i}(S_i)$ is a connected component of
  $\Sigma\setminus(\lambda_1\cup\dots\cup \lambda_{38})\subset\Sigma\setminus\Gamma$, we clearly have $S\subset \phi_{x_i}(S_i)$. 
  Now $S_i\subset B^{H_i}(x_i,15r)$, so $\phi_{x_i}(S_i)\subset B(x_i,15\sqrt{2}r)\subset B(x_i,22r)$. Hence, $S\subset B(x_i,22r)$ as desired.
\end{proof}
\subsection{The property~\ref{item:G2}}\label{sub:G2}
We will now show that under the same conditions as in Lemma~\ref{lem:G1} we can improve the collection $\cG$ in such
a way that \ref{item:G2} is satisfied.
\begin{lemma}\label{lem:G2}
  Suppose $\cG$ is a collection of good arcs.
  If $29r<R_0$,
  then there exists a collection of good arcs $\wt{\gamma}_{ij}$ satisfying \ref{item:G2}.
\end{lemma}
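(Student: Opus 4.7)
My plan is to produce $\wt{\cG}$ from $\cG$ by an iterated \emph{bigon-removal} procedure, and then to bound the remaining intersections by an injective assignment into the net $\cX$.

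I would call a pair of distinct points $p,q\in\gamma_{ij}\cap\gamma_{k\ell}$ a \emph{bigon} if the sub-arcs $\alpha\subset\gamma_{ij}$ and $\beta\subset\gamma_{k\ell}$ joining them bound an embedded disk $D\subset\Sigma$, and \emph{minimal} if $\Int D$ meets neither other arcs of $\cG$ nor other points of $\gamma_{ij}\cap\gamma_{k\ell}$. A standard nesting argument supplies a minimal bigon whenever any bigon exists. For such a $D$ with $\ell(\alpha)\le\ell(\beta)$, I would pick $x_m\in\cX$ with $\dist(p,x_m)<r$; since each boundary arc of $D$ has length less than $8r$ and starts within $r$ of $x_m$, the inclusions $\partial D\subset B(x_m,9r)\subset B(x_m,R_0)$ place $D$ inside the graph patch at $x_m$. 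Working in the planar chart through $\pi_{x_m}$, I would replace $\beta$ by a small parallel pushoff of $\alpha$ lying on the side opposite to $D$, lift back via $\phi_{x_m}$, and apply a generic perturbation to restore transversality. The modified curve remains embedded, transverse, keeps its endpoints $x_k,x_\ell$, and has length strictly smaller than $\gamma_{k\ell}$, so it stays a good arc. Since the total intersection count strictly decreases at each step, the iteration terminates after finitely many steps in a family $\wt{\cG}$ containing no bigons between any pair.

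To bound intersections between fixed $\wt{\gamma}_{ij},\wt{\gamma}_{k\ell}\in\wt{\cG}$, I would assign to each $p\in\wt{\gamma}_{ij}\cap\wt{\gamma}_{k\ell}$ a net point $x_{m(p)}\in\cX$ with $\dist(p,x_{m(p)})<r$ (breaking ties arbitrarily). Because $\wt{\gamma}_{ij}\subset B(x_i,8r)$, each $x_{m(p)}$ lies in $B(x_i,9r)\subset B(x_i,17r)$. The key claim is that the map $p\mapsto x_{m(p)}$ is injective. Were two distinct intersection points $p,q$ to map to the same $x_m$, the sub-arcs $\alpha'\subset\wt{\gamma}_{ij}$ and $\beta'\subset\wt{\gamma}_{k\ell}$ between $p$ and $q$ would both have length less than $8r$ and start within $r$ of $x_m$, hence lie in $B(x_m,9r)$, inside the graph patch at $x_m$. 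Their projections $\tilde\alpha,\tilde\beta$ under $\pi_{x_m}$ would be distinct simple planar arcs with common endpoints $\pi_{x_m}(p),\pi_{x_m}(q)$. A routine Euler-characteristic count on the $4$-valent planar graph $\tilde\alpha\cup\tilde\beta$ (whose bounded faces are necessarily even-sided, so if none were a bigon the edge--face incidence sum would exceed $2E$) forces at least one planar bigon region, and $\phi_{x_m}$ lifts that region to a bigon between $\wt{\gamma}_{ij}$ and $\wt{\gamma}_{k\ell}$ in $\Sigma$, contradicting bigon-freeness of $\wt{\cG}$. Injectivity together with Proposition~\ref{prop:sigma_bound} applied at radius $17r$ (valid since $(17+\tfrac14)r<R_0$) then yields $|\wt{\gamma}_{ij}\cap\wt{\gamma}_{k\ell}|\le\cT(17)$.

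The hard part will be making the bigon-removal modification robust: the pushoff must be small enough to stay inside the graph patch at $x_m$, to avoid creating new intersections with other arcs of $\cG$ outside a small neighborhood of $D$, and to strictly shorten $\gamma_{k\ell}$. Choosing the shorter of the two boundary arcs of $\partial D$ as the template for the pushoff, combined with a sufficiently small generic perturbation, handles all three requirements simultaneously; termination and the bookkeeping of good-arc properties are then routine. The planar bigon existence used in the injectivity step is a standard consequence of Euler's formula.
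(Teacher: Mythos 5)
There is a genuine gap in the first (and crucial) stage of your argument: the claim that you can iterate bigon removal until \emph{no} bigons remain. Two things go wrong. First, your notion of a ``minimal'' bigon (one whose interior meets no other arcs of $\cG$ and no further intersection points) need not exist even when bigons exist: a third arc $\gamma_{uw}$ can cross the bigon transversally from $\alpha$ to $\beta$, or can terminate at a net point $x_u\in\cX$ lying inside $D$, without creating any smaller bigon inside $D$; so the ``standard nesting argument'' only produces a bigon containing no smaller bigon, not one with clean interior. Second, and more seriously, once you work with such weakly minimal bigons, your termination claim ``the total intersection count strictly decreases at each step'' is false for bigons containing net points: if $\gamma_{uw}$ enters $D$ through $\alpha$ and ends at a net point inside $D$, it meets $\alpha$ once and $\beta$ not at all, so replacing $\beta$ by a pushoff of $\alpha$ \emph{creates} a new intersection with $\gamma_{uw}$; the total count can increase and new bigons can appear, with no control on their number (this is exactly the phenomenon the paper isolates in the remark after Lemma~\ref{lem:bigon_removal} and Figure~\ref{fig:fail_to_remove}). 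Consequently the terminal state your second part relies on --- a family with no bigons at all between any pair --- is not established, and the injectivity argument built on bigon-freeness has nothing to stand on. (There are also smaller bookkeeping issues: the pushoff can slightly lengthen $\gamma_{k\ell}$ when $\ell(\alpha)=\ell(\beta)$, which is why the paper keeps an explicit slack $\varepsilon$ with $\ell(\gamma_{ij})+\varepsilon<2\dist(x_i,x_j)$.)

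The paper's proof circumvents precisely this obstruction: it removes only \emph{desolate} minimal bigons (those containing no point of $\cX$), for which one shows (Lemma~\ref{lem:bigon_removal}) that every arc crossing $D$ must run from $\alpha$ to $\beta$, so the replacement destroys one bigon and creates none, and the process terminates. The remaining bigons are then all \emph{inhabitated}; since the bigons between two fixed curves have pairwise disjoint interiors and each lies in $B(x_i,17r)$ (Lemma~\ref{lem:bigon_is_bigon}, using $29r<R_0$ also for uniqueness of the bounding disk), their number --- hence the number of intersection points --- is bounded by the number of net points in that ball, which is $\cT(17)$ by Proposition~\ref{prop:sigma_bound}. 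If you want to salvage your write-up, you should restrict the removal procedure to desolate bigons and replace your injectivity step by this counting of net points inside inhabitated bigons.
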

\begin{proof}
  Choose $\varepsilon>0$ such that for any $i,j$ we have $\ell(\gamma_{ij})+\varepsilon< 2||x_i-x_j||$. We introduce the following notion.
  \begin{definition}
    A \emph{boundary bigon} is a pair of two arcs $\alpha$ and $\beta$ with common end points and disjoint interiors
    such that $\alpha$ and $\beta$ are parts of some curves $\gamma_{ij}$ and $\gamma_{kl}$.

    A \emph{bigon} $(D,\alpha,\beta)$ is a triple $(D,\alpha,\beta)$, where $(\alpha,\beta)$ form a boundary bigon
    and $D$ is an embedded disk
    $D\hookrightarrow \Sigma$ such that $\partial D=\alpha\cup\beta$ and $D$ belongs to $B(x_i,17r)$ for some $x_i$, which is an end point
    of a curve in $\cG$ whose part is either $\alpha$ or $\beta$.
  \end{definition}
  Suppose $\gamma_{ij}$ and $\gamma_{kl}$ are distinct arcs. Let $s$
  be the number of their intersection points. Then, $\gamma_{ij}$ and $\gamma_{kl}$ form $s-1$ boundary bigons.
  The
  proof of Lemma~\ref{lem:G2} relies on successively removing boundary bigons.
  \begin{lemma}\label{lem:bigon_is_bigon}
    Every boundary bigon is a bigon. 
  \end{lemma}
  \begin{proof}
    We know that $\ell(\gamma_{ij}),\ell(\gamma_{kl})<8r$, because $\dist(x_i,x_j)<4r$ and $\dist(x_k,x_l)<4r$. 
    From this it follows that $\gamma_{ij}\subset B(x_i,6r)$ and $\gamma_{kl}\subset B(x_k,6r)$.
    Indeed, if $x\in\gamma_{ij}$ is outside $B(x_i,6r)$, then, by triangle inequality
    $x\notin B(x_j,2r)$. Therefore, $\ell(\gamma_{ij})\ge \dist(x_i,x)+\dist(x,x_j)>8r$.

    As $\gamma_{kl}\cap\gamma_{ij}$ is not empty, we conclude that $\gamma_{kl}\cup\gamma_{ij}\subset B(x_i,12r)$.
    Since $12\sqrt{2}r<17r<R_0$, the map $\pi_{x_i}\colon B(x_i,17r)\to H_i$ 
    is well-defined and its image $U_i$
    contains the ball $B^{H_i}(x_i,12r)$; see \ref{item:pi}, \ref{item:U}.
    Let $\wh{\alpha},\wh{\beta}=\pi_{x_i}(\alpha),\pi_{x_i}(\beta)$. Then $\wh{\alpha}\cup\wh{\beta}$ is a simple closed curve on $H_i$ contained in $B^{H_i}(x_i,12r)$. By Jordan curve theorem, there exists a disk $\wh{D}$ in $B^{H_i}(x_i,12r)$ such that $\partial\wh{D}=\wh{\alpha}\cup\wh{\beta}$. The desired disk $D$ is obtained as $\phi_{x_i}(\wh{D})$. 
    By \ref{item:lip}, $D\subset B(x_i,12\sqrt{2}r)\subset B(x_i,17r)$.
  \end{proof}
  \begin{corollary}\label{cor:unique}
    Let $(\alpha,\beta)$ be a boundary bigon. If $D_1$, $D_2$ are two embeddings of a disk such that $(D_1,\alpha,\beta)$ and $(D_2,\alpha,\beta)$
    are bigons, then the images of $D_1$ and $D_2$ coincide.
  \end{corollary}
  \begin{proof}
    Suppose that $D_1$ and $D_2$ do not coincide. Their interiors are disjoint, for otherwise $\Sigma$ has self-intersections.
    Assume that $D_1\in B(x_i,17r)$ and $D_2\in B(x_{i'},17r)$, where $i'$ is any of the $i,j,k,\ell$ (we keep the notation
    of Lemma~\ref{lem:bigon_is_bigon}).
    Note that $\dist(x_i,x_j)<4r$ and $\dist(x_i,x_k),\dist(x_i,x_\ell)<12r$.
    Therefore, in the worst case scenario, when $D_2\subset B(x_k,17r)$ or $D_2\subset B(x_\ell,17r)$, we still have that
    $D_2\subset B(x_i,29r)$. Then $D_1\cup D_2$ glue to a sphere in $B(x_i,29r)$. But $29r<R_0$ and so $D_1\cup D_2$ belongs
     to a graph patch. This is impossible.
  \end{proof}
  Continuing the proof of Lemma~\ref{lem:G2}, we introduce more terminology.
  Let $D=(D,\alpha,\beta)$ be a bigon. We say that
  \begin{itemize}
    \item $D$ is \emph{minimal} if $D$ does not contain any smaller bigon;
    \item $D$ is \emph{desolate}  if $D$ does not contain any point $x_i$;
    \item $D$ is \emph{inhabitated} if $D$ contains at least one of $x_i$; see Figure~\ref{fig:des_inh}.
  \end{itemize}
  \begin{figure}
    \includegraphics[width=0.9\linewidth]{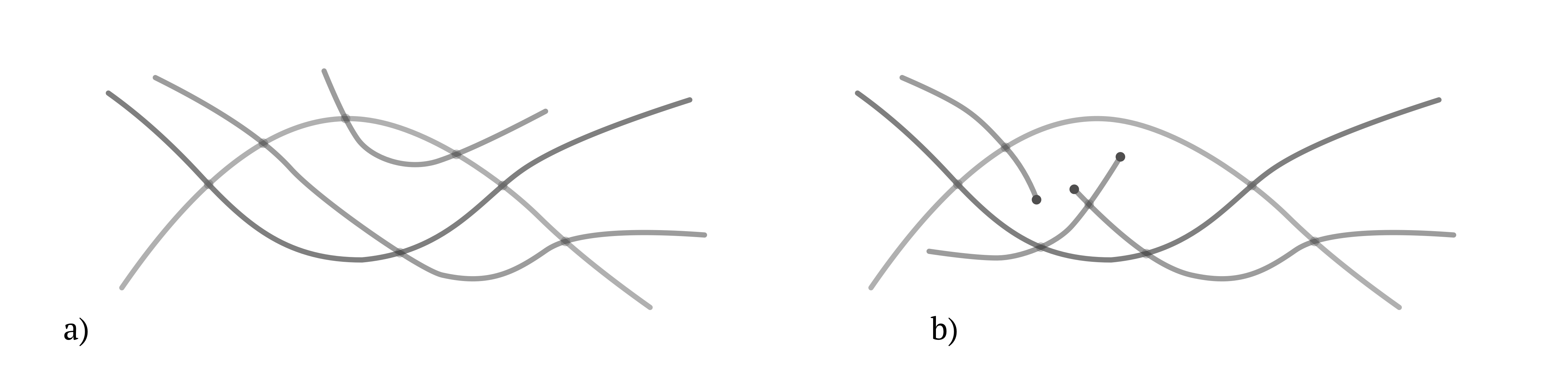}
    \caption{A desolate and an inhabitated bigon.}\label{fig:des_inh}
  \end{figure}

We will now describe a procedure called bigon removal;
see Figure~\ref{fig:bigon_removal}. Suppose $(D,\alpha,\beta)$
is a bigon. We can swap the roles of $\alpha$ and $\beta$, if needed,
to ensure that $\alpha$ is not longer than $\beta$.
  The curve $\beta$ is replaced by a curve $\beta'$ parallel to the curve $\alpha$. It is clear that the change
  can be made in such a way that the length of $\beta$ is not increased by more than $\varepsilon/2$.
    It might happen that one of the vertices $\alpha\cap\beta$ of the bigon is actually a starting
    point of the two curves in $\cG$, whose parts form a bigon. The picture in Figure~\ref{fig:bigon_removal}
    should be slightly altered, but the argumentation is the same.
  \begin{figure}[h!]
  \centering
  \includegraphics[width=0.9\linewidth]{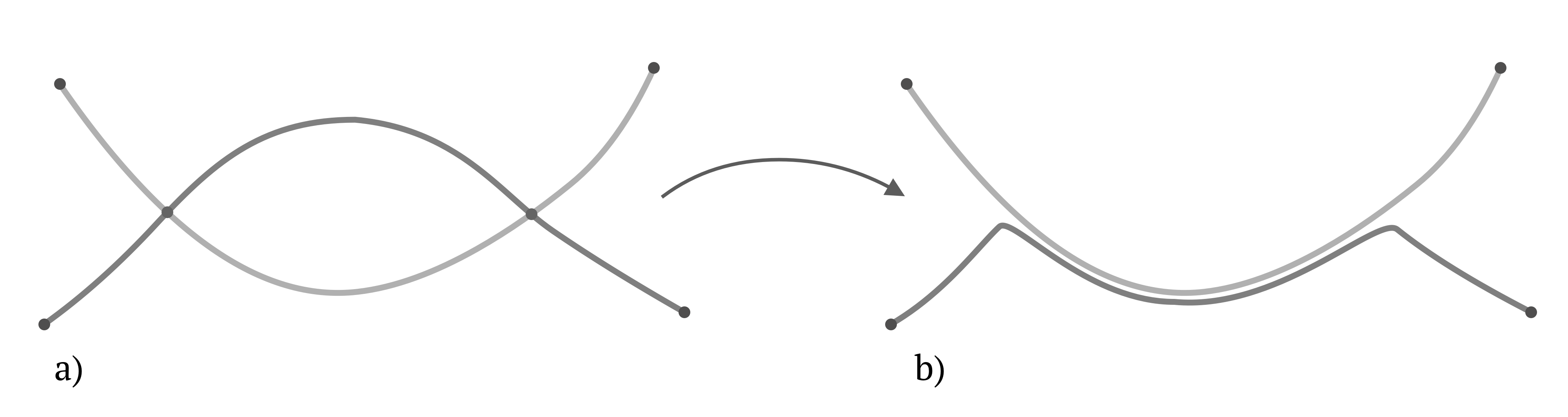}
  \caption{Bigon removal: (a) output bigon, (b) fragment of one curve replaced with the approximation of the second.}
  \label{fig:bigon_removal}
\end{figure}
\begin{lemma}\label{lem:bigon_removal}
  If $D$ is a desolate minimal bigon, bigon removal procedure applied to $D$ decreases the number of desolate bigons by $1$
  and creates no other bigons. 
\end{lemma}
\begin{proof}
  By Corollary~\ref{cor:unique}, 
  the number of bigons between two different curves $\gamma,\gamma'\in\cG$ is equal to $|\gamma\cap\gamma'|-1$. Therefore, we will strive to show that the number of intersection points between all curves in $\cG$
  decreases after bigon removal.

  Let $\gamma_{kl}$ and $\gamma_{st}$ be such that $\alpha\subset\gamma_{kl}$ and $\beta\subset\gamma_{st}$.
  The $\gamma'_{st}$ be the curve $\gamma_{st}$ with $\beta$ replaced by $\beta'$. We have
  \[|\gamma_{kl}\cap\gamma_{st}'|=|\gamma_{kl}\cap\gamma_{st}|-2.\]

  Suppose $\gamma_{uw}$ is another curve in $\cG$. If it does not hit the bigon $D$, we have that
  $\gamma_{st}\cap\gamma_{uw}=\gamma_{st}'\cap\gamma_{uw}$, so the number of intersection points is preserved.
  If $\gamma_{uw}$ hits the bigon $D$, we look at connected components of $\gamma_{uw}\cap D$. Each such connected
  component $\delta$
  is an arc, and if $|\delta\cap\alpha|=2$ or $|\delta\cap\beta|=2$, the arc $\delta$ and the relevant
  part of $\alpha$ or $\beta$ form a bigon contained in $D$, contradicting minimality of $D$. If $|\delta\cap(\alpha\cup\beta)|=1$,  one of the end points of $\delta$ is inside $D$, but such an end point must be
  an end point of $\gamma_{st}$, that is, it must be a point from $\cX$. This contradicts the condition
  that $D$ be desolate.

  The only remaining possibility is that  $|\delta\cap\alpha|=|\delta\cap\beta|=1$. This, in turn, shows that $|\gamma_{uw}\cap\alpha|=
  |\gamma_{uw}\cap\beta|$. Now, by construction $|\gamma_{uw}\cap\beta'|=|\gamma_{uw}\cap\alpha|$. Eventually
  $|\gamma_{uw}\cap\beta|=|\gamma_{uw}\cap\beta'|$, that is, $|\gamma_{uw}\cap\gamma_{st}'|=|\gamma_{uw}\cap\gamma_{st}|$. In other words, no new bigons are created.
\end{proof}
\begin{remark}
  The statement of Lemma~\ref{lem:bigon_removal} need not hold if $D$ is inhabitated. A bigon removal procedure
  can create new bigons, both inhabitated and desolate, whose number is rather hard to control; see Figure~\ref{fig:fail_to_remove}.
\end{remark}
\begin{figure}
  \includegraphics[width=0.9\linewidth]{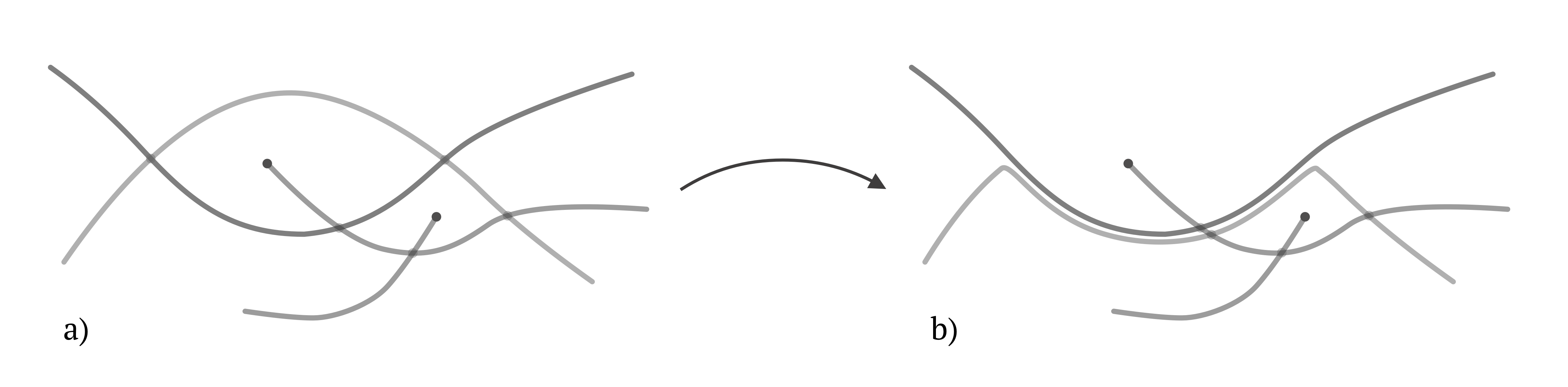}
  \caption{An attempt to remove an inhabitated bigon results in creating another inhabitated bigon. There is a little control on the number of bigons that can be produced in this way.}\label{fig:fail_to_remove}
\end{figure}
After a single bigon removal procedure, the length of one of the curves $\gamma_{ij}$ can increase,
we decrease $\varepsilon$ so that $\ell(\gamma_{ij})+\varepsilon<2\dist(x_i,x_j)$ for all $(i,j)\in\cI$. 

We now apply inductively the bigon removal procedure to all minimal desolate bigons, until there are no minimal desolate bigons. 
This requires a finite number of steps.
We make the following trivial observation.
\begin{lemma}
  If there are no minimal desolate bigons, there are no desolate bigons at all.
\end{lemma}
From now on we will assume that the set $\cG$ of curves is such that there are no desolate bigons.
The following lemma concludes the proof of Lemma~\ref{lem:G2}.
\begin{lemma}\label{lem:final_bound}
  Suppose curves $\gamma_{ij}$ and $\gamma_{kl}$ do not form desolate bigons. Then, the number of intersection
  points between $\gamma_{ij}$ and $\gamma_{kl}$ is bounded by $\cT(17)$.
\end{lemma}
\begin{proof}
  Suppose $\gamma_{ij}$ and $\gamma_{kl}$ are not disjoint. By Lemma~\ref{lem:bigon_is_bigon}, each bigon formed
  by $\gamma_{ij}$ and $\gamma_{kl}$ belongs to $B(x_i,17r)$. All such bigons have pairwise disjoint interiors.
  Moreover, each bigon is inhabitated. The number of bigons is bounded from above by the total number
  of points of $\cX$ in $B(x_i,17r)$, which is $\cT(17)$ according to Proposition~\ref{prop:sigma_bound}.

  The number of intersection points is not greater than the number of bigons. The lemma follows.
\end{proof}
The proof of Lemma~\ref{lem:G2} is complete.
\end{proof}

\section{Triangulation}\label{sec:triang_bound}
\subsection{Bounding number of triangles}\label{sub:bound_for_good}
\begin{proposition}\label{prop:triang_bound}
  Let $\cX$ be an $r$-net with $29r<R_0$.
  Suppose $\cG$ is a good tame collection of arcs. Then $\Sigma$ can be triangulated with
  at most $S(\Sigma)$ triangles with
  \begin{equation}\label{eq:S_sigma}S(\Sigma)=
    \frac{1}{12}\cT(4)^2\cT(8)^2\cT(12)^2\cT(15)^2\cT(17)^2|\cX|.
\end{equation}
\end{proposition}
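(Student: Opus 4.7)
My plan is to regard $\Gamma=\bigcup_{(i,j)\in\cI}\gamma_{ij}$ as the 1-skeleton of a cell-like decomposition of $\Sigma$: the 0-cells are $\cX$ together with the set $I$ of pairwise intersection points of arcs in $\cG$, the 1-cells are the subarcs of each $\gamma_{ij}$ cut out by these intersections, and the 2-cells are the connected components $f$ of $\Sigma\setminus\Gamma$. Property~\ref{item:G1} ensures that each such $f$ is homeomorphic to a bounded open subset of $\R^2$, and Corollary~\ref{cor:for_future_use} places $f\subset B(x_{i(f)},22r)$ for some index $i(f)$.

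For each face $f$ I would then apply a classical planar triangulation without introducing new vertices. A standard inductive argument---cutting along diagonals that join inner boundary components to the outer one and reducing to the disk case, where a simple polygon with $n$ vertices triangulates into $n-2$ triangles---shows that a connected planar open region with $e_f$ boundary edges and $h_f$ inner holes admits such a triangulation into exactly $e_f-2+2h_f$ triangles. Summing over $f$, the total count is
\[
T=\sum_f(e_f-2+2h_f)=2E-2F+2H,
\]
where $E$, $F$, and $H=\sum_f h_f$ denote respectively the number of edges, the number of faces, and the total hole count of the decomposition.

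The remaining task is to bound $E$ and $H$ linearly in $|\cX|$ with explicit $\cT$-factors. By Proposition~\ref{prop:sigma_bound}, each $x_i$ is the endpoint of at most $\cT(4)$ arcs, so $|\cG|\le|\cX|\,\cT(4)/2$. Since $\gamma_{ij}\subset B(x_i,6r)$, any arc meeting $\gamma_{ij}$ has its endpoint in $B(x_i,12r)$, giving at most $\cT(12)\cT(4)$ such partners, each meeting $\gamma_{ij}$ in at most $\cT(17)$ points by~\ref{item:G2}; hence $|I|$ and $E=|\cG|+2|I|$ are linear in $|\cX|$. For $H$, each edge of $\Gamma$ has two sides, each lying on exactly one boundary component of some face, so $F+H\le 2E$ and in particular $H\le 2E$. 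This already gives $T\le 6E=O(|\cX|)$; to reach the precise form $\tfrac{1}{12}\cT(4)^2\cT(8)^2\cT(12)^2\cT(15)^2\cT(17)^2|\cX|$ I would refine the accounting face by face, bringing in the auxiliary radii $8$ and $15$ that already appear in the proofs of Lemma~\ref{lem:G1} and Lemma~\ref{lem:belongs_to} to control how many arcs contribute to each face assigned to a given $x_i$.

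The main obstacle will be the clean control of $\sum_f h_f$, since a priori a single face could have many inner boundary components. I would resolve this by assigning each $f$ to a center via Corollary~\ref{cor:for_future_use} and running a planar-graph Euler estimate for the graph $\pi_{x_{i(f)}}(\Gamma\cap B(x_{i(f)},22r))$ inside $H_{i(f)}$, using that $\Sigma\cap B(x_{i(f)},22r)$ is graphical over $H_{i(f)}$ by Corollary~\ref{cor:regularity}.
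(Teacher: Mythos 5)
Your route is genuinely different from the paper's, and its core is sound. The paper never runs an Euler-type count: it bounds the vertex set $\cZ$ of the arrangement inside each ball $B(x_i,15r)$ (Lemma~\ref{lem:total_points} and its localized version), proves that every edge of the final triangulation joins two points of $\cZ$ lying in a common ball $B(x_i,15r)$ (Lemma~\ref{lem:s_connected}, which reuses the cycle $\Delta$ from the proof of Lemma~\ref{lem:G1}), bounds the number of edges by the number of such pairs summed over $x_i\in\cX$, and finally converts edges to triangles by the factor $2/3$; this is exactly where all five radii $4,8,12,15,17$ enter \eqref{eq:S_sigma}. You instead count triangles face by face via the polygon-with-holes formula $e_f-2+2h_f$ together with the global identities $\sum_f e_f=2E$ and $F+H\le 2E$, so you only need the complexity of the arrangement itself: $|\cG|\le\tfrac12\cT(4)|\cX|$ by Proposition~\ref{prop:sigma_bound}, at most $\cT(4)\cT(12)$ partner arcs per arc, and at most $\cT(17)$ intersections per pair by \ref{item:G2}. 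In particular Lemma~\ref{lem:s_connected} and the $15r$-localization become unnecessary, and your bound is smaller than the paper's.

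Two loose ends. First, the step you defer (``refine the accounting \dots bringing in the radii $8$ and $15$'') is not needed: your estimate gives $E\le|\cG|\bigl(1+\cT(4)\cT(12)\cT(17)\bigr)\le\cT(4)^2\cT(12)\cT(17)|\cX|$, hence $T\le 6E\le 6\,\cT(4)^2\cT(12)\cT(17)|\cX|$, and since every factor satisfies $\cT(\sigma)=2^{3n/2+7}(\sigma+1/4)^2\ge 72$, this is already below $S(\Sigma)$; just state that comparison instead of promising a refinement. Likewise your announced ``main obstacle'' about $\sum_f h_f$ is already disposed of by your own inequality $F+H\le 2E$, so the planned extra argument via Corollary~\ref{cor:for_future_use} is superfluous. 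Second, both your argument and the paper's silently assume each face can be triangulated using only existing vertices: a face whose boundary is a bigon (two vertices, two edge-sides), which goodness and tameness alone do not exclude, cannot be so triangulated, and the formula $e_f-2+2h_f$ needs the usual doubling conventions for boundary cycles that repeat vertices or edges. This is a gap you share with the paper rather than a deviation from it; to close it one can invoke the elimination of desolate bigons carried out in the proof of Lemma~\ref{lem:G2}, since no $x_i$ lies in the interior of a face.
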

\begin{proof}
  The proof of Proposition~\ref{prop:triang_bound} takes the rest of Subsection~\ref{sub:bound_for_good}.
  The triangulation is constructed by subdivision of $\Gamma=\bigcup\gamma_{ij}$. The vertices
  are going to be the points in $\cX$ as well as the intersection points $\gamma_{ij}\cap\gamma_{kl}$.
  We first bound the total number of intersection points of $\gamma_{ij}$. 
  \begin{lemma}\label{lem:cxbound}
    Suppose $x_i\in\cX$. The total number of points triples $j,k,l$
    such that $\gamma_{ij}\cap\gamma_{kl}\neq\emptyset$
    is less than or equal to $2\cT(4)\cT(8)\cT(12)$.
  \end{lemma}
  \begin{proof}
    First of all, number of indices $j$ such that $\dist(x_i,x_j)<4r$
    is at most $\cT(4)$. Next, a point $x\in\gamma_{ij}\cap\gamma_{kl}$
    has to lie at distance less than $4r$ from either $x_i$ or $x_j$. Suppose $\dist(x_i,x)<4r$.
    The same argument shows that either $\dist(x_k,x)<4r$ or $\dist(x_l,x)<4r$. Switch $k$ and $l$ so
    that $\dist(x_k,x)<4r$.
    Then $\dist(x_i,x_k)<8r$ and $\dist(x_i,x_l)<8r+\dist(x_k,x_l)<12r$.

    The total number of choices of $x_j$ is $\cT(4)$. The total number of choices of $x_k$ and $x_l$
    is $2\cT(8)\cT(12)$: the factor $2$ comes from chosing whether $\dist(x_i,x)<4r$ or $\dist(x_j,x)<4r$.
  \end{proof}
  Let now 
  \[\cZ=\bigcup_{(l,k)\neq (i,j)\neq (k,l)}\{\gamma_{ij}\cap\gamma_{kl}\}.\]
  Note that $\cX\subset\cZ$. Indeed, it is not hard to see that for any $i$ there are
  at least two points $j,l$ such that $(i,j),(i,l)\in\cI$ and then $x_i\in\gamma_{ij}\cap\gamma_{il}$.
  \begin{lemma}\label{lem:total_points}
    We have $|\cZ|<\frac12\cT(4)\cT(8)\cT(12)\cT(17)|\cX|$.
  \end{lemma}
  \begin{proof}
    Take $x_i\in\cX$. By Lemma~\ref{lem:cxbound} there are at most $2\cT(4)\cT(8)\cT(12)$
    configurations $j,k,l$ such that $\gamma_{ij}\cap\gamma_{kl}\neq\emptyset$.
    Therefore, the total number of quadruples $i,j,k,l$ such that $\gamma_{ij}\cap\gamma_{kl}\neq\emptyset$
    is $\frac12\cT(4)\cT(8)\cT(12)|\cX|$. The difference of factors $2$ and $1/2$ corresponds
    to the following observation: when summing over the indices $i$, each quadruple $i,j,k,l$ is actually counted four times. First, we can switch $i$ with $j$. Then
    we can switch the pairs $(i,j)$ and $(k,l)$.

    If two curves $\gamma_{ij}$ and $\gamma_{kl}$ intersect, by \ref{item:G2} the total number
    of intersections is at most $\cT(17)$. The lemma follows.
  \end{proof}
  The same argument yields
  \begin{lemma}
    Let $\sigma>0$ be such that $(\sigma+1/4)r<R_0$.
    Let $\cZ^{\sigma}_i$ be the number of points $z\in\cZ$ such that $z\in B(x_i,\sigma r)$. 
    Then $|\cZ^{\sigma}_i|<\frac12\cT(4)\cT(8)\cT(12)\cT(17)\cT(\sigma)$.
  \end{lemma}
  \begin{proof}
    We use the proof of Lemma~\ref{lem:total_points}. On passing from the number of triples $j,k,l$
    such that $\gamma_{ij}\cap\gamma_{kl}\neq\emptyset$ we multiply by the bound of the number of
    points in $B(x_i,\sigma r)$, that is, $\cT(\sigma)$, instead of the total number of points $x_i$,
    that is, $|\cX|$.
  \end{proof}
  Now we pass to construction of triangulation. By the property \ref{item:G1}, each connected component
  of $\Sigma\setminus\Gamma$ is a subset of $\R^2$. 
 Take such a connected component $C$. Its boundary is a union of (parts of) curves
  $\gamma_{ij}$ intersecting at points of $\cZ$. We think of $C$ as a polygon with vertices $\cZ$, though we do not necessarily assume
  that $C$ has connected boundary.
  We triangulate this polygon by adding curves that connect vertices of $\cZ$. This provides us
  with a triangulation. In particular, the triangulation has
  the following property.
  \begin{property}\label{prop:erty}
An edge of the triangulation connects two elements in $\cZ$, which
belong to the closure of the same connected component of 
$\Sigma\setminus\Gamma$. 
  \end{property}

  To estimate the number of triangles we use the following lemma.
  \begin{lemma}\label{lem:s_connected}
    Suppose $z_1,z_2\in\cZ$ and $z_1,z_2$ belong to the closure
    of the same connected component $S_z$ of $\Sigma\setminus\Gamma$.
    Then, there exists $x_i\in\cX$ such that $z_1,z_2\in\Sigma\cap B(x_i,15r)$.

  \end{lemma}
  \begin{proof}
    Let $x_i$ be a point on $\Sigma$ such that $\dist(x_i,z_1)<r$.
    Set $y_1=\pi_{x_i}(z_1)$, $y_2=\pi_{x_i}(z_2)$.
    Consider the cycle $\Delta$ on $H_i$ as in Corollary~\ref{cor:delta_def}. By Corollary~\ref{cor:delta_def} there is a piecewise smooth
    simple closed curve $\Delta_0\subset\Delta$ such that $\Delta_0$
    cuts $H_i$ into two components: the component $S_i$, which is
    bounded and contains $B^{H_i}(x_i,r)$ and the other, which is unbounded. As $y_1\in B^{H_i}(x_i,r)$, the closure of $\pi_{x_i}(S_z)$
    must belong to $S_i$. In particular, $y_2$ cannot belong to $H_i\setminus\ol{S_i}$. Now, $\phi_{x_i}(\Delta)=\pi_{x_i}^{-1}(\Delta)$ is a subset
    of the union of the curves $\lambda_j$, all belonging to $B(x_i,15r)$. Therefore, $z_2\in B(x_i,15r)$ as well.

  \end{proof}
  \begin{corollary}\label{cor:triang}
    The total number of edges in the triangulation is bounded from above by 
    \[\frac18\cT(4)^2\cT(8)^2\cT(12)^2\cT(15)^2\cT(17)^2|\cX|.\]
  \end{corollary}
  \begin{proof}
    By Lemma~\ref{lem:s_connected} and Property~\ref{prop:erty}, any two edges connecting points $z_1$ and $z_2$ belong to the
    same $B(x_i,15r)\cap\Sigma$ for some $x_i\in\cZ$.
    The number of pairs $z_1,z_2$ of points $\cZ$ in $B(x_i,15r)$
    is equal to 
    \[\frac12|\cZ^{15}_i|^2\le\frac18\cT(4)^2\cT(8)^2\cT(12)^2\cT(15)^2\cT(17)^2.\]
    Summing up over all $x_i\in\cX$ we get the result.
  \end{proof}
  The rest of the proof of Proposition~\ref{prop:triang_bound} is straightforward.
    Each edge belongs to precisely two triangles and each triangle has
    three edges.
\end{proof}
\subsection{Proof of Theorem~\ref{thm:main}}\label{sub:main}

Suppose $\cE^\ell_p(\Sigma)=E$ and let $C_0=2^{-1/2}\min(c_1,c_2^{-1/p\alpha})$; compare
Corollary~\ref{cor:regularity}. Here, $\alpha=1-2\ell/p$.
Set $r=\frac{1}{29}C_0 E^{-1/p\alpha}$.
Choose a net of points $\cX$ with this given $r$. By Proposition~\ref{prop:covering}, we have
\begin{equation}\label{eq:cx_bound}
|\cX|<C_1 A E^{2/p\alpha},
\end{equation}
where $C_1=2^{n/2+8}29^2V_2 C_0^{-2}$.

Let $\cG$ be a collection of good arcs connecting some of the pairs of points of $\cX$;
such collection exists by Proposition~\ref{prop:good_arcs}, because $4r<29r<R_0=C_0E^{-1/p\alpha}$.
The collection $\cG$ can be improved to a tame collection of good arcs by Lemmata~\ref{lem:G1}
and~\ref{lem:G2}, which work because $29r<R_0$.
A tame collection of arcs provides a triangulation with the number of triangles bounded above
by $S(\Sigma)$ triangles; see Proposition~\ref{prop:triang_bound}. In total, the number
of triangles is bounded above by $C_2 A E^{2/p\alpha}$, where
\[C_2=
\frac{1}{12}\cT(4)^2\cT(8)^2\cT(12)^2\cT(15)^2\cT(17)^2C_1.\]

By construction, each triangle is an image of a subset of a plane
$H_i$ (for some $i$) under the map $\phi_{x_i}$, which has
bounded derivative and bounded distortion by Corollary~\ref{cor:regularity} and Corollary~\ref{cor:distort}.
\def\MR#1{}
\bibliographystyle{abbrv}
\bibliography{research}
\end{document}